\font\smallit=cmti10
\renewcommand\section{\@startsection {section}{1}{\z@}
{-30pt \@plus -1ex \@minus -.2ex}
{2.3ex \@plus.2ex}
{\normalfont\normalsize\bfseries\boldmath}}
\renewcommand\subsection{\@startsection{subsection}{2}{\z@}
{-3.25ex\@plus -1ex \@minus -.2ex}
{1.5ex \@plus .2ex}
{\normalfont\normalsize\bfseries\boldmath}}
\renewcommand{\@seccntformat}[1]{\csname the#1\endcsname. }
\theoremstyle{definition}
\newtheorem{thm}{Theorem}[section]
\newtheorem{lem}[thm]{Lemma}
\newtheorem{cor}[thm]{Corollary}
\newtheorem{prop}[thm]{Proposition}
\newtheorem{prob}[thm]{Problem}
\newtheorem{rem}[thm]{Remark}
\newcommand{\Z}{\mathbb{Z}}
\newcommand{\abegroup}{\mathbb{Z}_{n_1}\oplus \cdots \oplus \mathbb{Z}_{n_r}} 
\newcommand{\abeliangroup}[1]{\mathbb{Z}_{n_1}\oplus \cdots \oplus \mathbb{Z}_{n_{#1}}} 
\newcommand{\pgroup}[1]{\mathbb{Z}_{p^{\alpha_1}} \oplus  \cdots \oplus \mathbb{Z}_{p^{\alpha_{#1}}}} 
\newcommand{\evector}[1]{(\varepsilon_1, \ldots , \varepsilon_{#1})} 
\newcommand{\pvector}[1]{(p^{\alpha_1}, \ldots, p^{\alpha_{#1}})}
\newcommand{\Dav}[1]{\mathsf{D}(#1)} 
\newcommand{\D}[3]{\mathsf{D}({#3},{#2})} 
\newcommand{\elem}[2]{e_{#1}({#2})} 
\newcommand{\Elem}[2]{e_{#1}(x_1, \ldots , x_{#2})} 
\newcommand{\Pow}[2]{p_{#1}(x_1, \ldots, x_{#2})}  
\newcommand{\Up}[2]{U({#1},{#2})} 
\newcommand{\Low}[2]{L({#1},{#2})} 
\newcommand{\Odd}[2]{E({#1},{#2},odd)}
\newcommand{\Even}[2]{E({#1},{#2},even)}
\newcommand{\floor}[1]{\lfloor {#1} \rfloor} 
\newcommand{\dom}[1]{t({#1})} 
\def\imod#1{\allowbreak\mkern10mu({\operator@font mod}\,\,#1)}
\begin{document}

\begin{center}
\uppercase{\bf Higher Degree Davenport Constants over Finite Commutative Rings}
\vskip 20pt
{\bf Yair Caro}\\
{\smallit Department of Mathematics, University of Haifa-Oranim, Israel}\\
{\tt yacaro@kvgeva.org.il}\\ 
\vskip 10pt
{\bf Benjamin Girard}\\
{\smallit Sorbonne Universit\'e and Universit\'e de Paris, CNRS, Institut de Math\'ematiques de Jussieu - Paris Rive Gauche, Paris, France}\\
{\tt benjamin.girard@imj-prg.fr}\\ 
\vskip 10pt
{\bf John R. Schmitt}\\
{\smallit Department of Mathematics, Middlebury College, Middlebury, Vermont, USA}\\
{\tt jschmitt@middlebury.edu}

\end{center}
\vskip 20pt

\vskip 30pt

\centerline{\bf Abstract}

\noindent

We generalize the notion of Davenport constants to a `higher degree' and obtain various lower and upper bounds, which are sometimes exact as is the case for certain finite commutative rings of prime power cardinality.  Two simple examples that capture the essence of these higher degree  Davenport constants are the following.  1) Suppose $n = 2^k$, then every sequence of integers $S$ of length $2n$  contains a subsequence $S'$ of length at least two such that $\sum_{a_i,a_j \in S'} a_ia_j \equiv 0 \pmod{n}$ and the bound is sharp.  2) Suppose $n \equiv1 \pmod{2}$, then every sequence of integers $S$ of length $2n -1$  contains a subsequence $S'$ of length at least two such that  $\sum_{a_i,a_j \in S'} a_ia_j \equiv 0 \pmod{n}$.  These examples illustrate that if a sequence of elements from a finite commutative ring is long enough, certain symmetric expressions have to vanish on the elements of a subsequence.

\pagestyle{myheadings}
\thispagestyle{empty}
\baselineskip=12.875pt
\vskip 30pt


\section{Introduction}\label{sec:introduction}

Throughout this paper, let $p$ denote a prime number and $q=p^{\alpha}$ a prime power.

Let $G$ be a finite abelian group.  A finite sequence $S=(g_1, \ldots, g_{\ell})$ of elements of $G$ is called a {\it sequence over $G$}, where order is disregarded and repetition is allowed.  Its {\it length}, denoted $|S|$, is the number of elements therein, counted with multiplicity.  A sequence of $G$ is said to be {\it zero-sum} if the sum of its elements is zero in $G$.  A sequence $S$ of $G$ is said to be {\it zero-sum free} if every non-trivial subsequence of $S$ has sum different to zero.  For a group $G$, the {\it Davenport constant of $G$}, which we denote by $\Dav{G}$, is the smallest positive integer $t$ such that every sequence $S$ over $G$ of length $|S| \geq t$ contains a non-empty zero-sum subsequence.  That is, we seek the smallest $t$ for which there is a non-trivial solution of
\begin{equation*}\label{eqn:Davenport}
\varepsilon_1g_1+ \cdots +\varepsilon_tg_t=0,
\end{equation*}
where each $\varepsilon_i$ is $0$ or $1$.

Study of this number intensified in the 1960s with K. Rogers \cite{Rogers63} in 1963, and later with H. Davenport in 1966 as explained by J.E. Olson in \cite{Olson69a} and has continued unabated since; see, for example, a useful survey by W. Gao and A. Geroldinger \cite{Gao-Geroldinger06}.

The cyclic group with $n$ elements will be denoted $\Z_n$.  Further, it is well-known that by the Fundamental Theorem of Finite Abelian Groups that for any finite non-trivial abelian group $G$ there exist integers $n_1, \ldots , n_r$ where $1 < n_1 \mid \ldots \mid n_r$ so that $G$ can be written uniquely as $$G \cong \abegroup.$$
The integer $r$ is called the {\it rank} of $G$ and denoted $r(G)$.  We use $\mathsf{d}^*(G)$ to denote the value $\sum_{i=1}^r(n_i-1)$.

The value of $\Dav{G}$ was determined independently by J.E.~Olson \cite{Olson69a} and D. Kruyswijk \cite{vanEmdeBoasKruyswijk67} when $G$ is a $p$-group, and by J.E.~Olson \cite{Olson69b} when $G$ has rank at most $2$.

\begin{thm}\label{thm:Olson}{[J.E.~Olson \cite{Olson69a}, \cite{Olson69b}, and D.~Kruyswijk \cite{vanEmdeBoasKruyswijk67}]}
If $G$ is a $p$-group or $r(G) \leq 2$, then $\Dav{G} = 1+\mathsf{d}^*(G)$.
\end{thm}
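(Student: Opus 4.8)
The plan is to prove Olson–Kruyswijk's theorem in two parts, matching the two hypotheses, since the $p$-group case and the rank-$\le 2$ case require genuinely different techniques. In both cases the easy direction is the lower bound $\mathsf{D}(G) \ge 1+\mathsf{d}^*(G)$: I would exhibit a zero-sum free sequence of length $\mathsf{d}^*(G)$. Writing $G \cong \mathbb{Z}_{n_1}\oplus\cdots\oplus\mathbb{Z}_{n_r}$ with standard generators $e_1,\ldots,e_r$, the sequence consisting of $e_i$ repeated $n_i-1$ times (for each $i$) has length $\sum_{i=1}^r(n_i-1)=\mathsf{d}^*(G)$, and any nonempty subsequence sums to $\sum_i c_i e_i$ with $0\le c_i \le n_i-1$ and not all $c_i$ zero, hence is nonzero. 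This shows no nonempty zero-sum subsequence exists, so $\mathsf{D}(G)\ge 1+\mathsf{d}^*(G)$.

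For the upper bound when $G$ is a $p$-group, I would use the group-ring / polynomial-method argument of Kruyswijk. Set $|G|=p^\beta$ and take a sequence $S=(g_1,\ldots,g_\ell)$ with $\ell = 1+\mathsf{d}^*(G)$; I want to force a nonempty zero-sum subsequence. The idea is to count zero-sum subsequences via characters: the number of subsequences (including the empty one) with sum $0$ equals $\frac{1}{|G|}\sum_{\chi}\prod_{i=1}^{\ell}(1+\chi(g_i))$, where $\chi$ ranges over the characters of $G$. The key arithmetic fact for a $p$-group is that each factor $1+\chi(g_i)$ is $1+\zeta$ for a $p$-power root of unity $\zeta$, and one shows that $\prod_i(1+\chi(g_i))$ is divisible by a controlled power of $p$, with the exponent large enough (using $\ell > \mathsf{d}^*(G)$) that the whole sum is divisible by $|G|$ \emph{beyond} the contribution of the empty sequence. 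Concretely, working in $\mathbb{Z}[\zeta]$ and estimating the $p$-adic valuation of $\prod_i(1+\chi(g_i))$ via the ramification $(1-\zeta)\mid p$, one proves the count of zero-sum subsequences is a multiple of $p$ (or at least $>1$), so a nonempty one must exist. The hard part will be the $p$-adic valuation bookkeeping: tracking how the valuation of each factor accumulates and verifying that the length $1+\mathsf{d}^*(G)$ is precisely the threshold that makes the character sum vanish modulo a sufficiently high power of $p$.

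For the upper bound when $r(G)\le 2$, the character-sum method does not directly give the answer, and I would instead follow Olson's inductive and combinatorial argument. Writing $G\cong \mathbb{Z}_{n_1}\oplus\mathbb{Z}_{n_2}$ with $n_1\mid n_2$, the target is $\mathsf{D}(G)=n_1+n_2-1$. The approach is to take a sequence of length $n_1+n_2-1$, project onto the $\mathbb{Z}_{n_2}$ component, and exploit the Erd\H{o}s–Ginzburg–Ziv–type / Davenport result for cyclic groups ($\mathsf{D}(\mathbb{Z}_n)=n$) together with a careful analysis of how short zero-sum subsequences in the quotient lift to zero-sum subsequences in $G$. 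One collects disjoint nonempty subsequences whose images in $\mathbb{Z}_{n_2}=G/\mathbb{Z}_{n_1}$ sum to zero; each such subsequence then has a well-defined ``value'' in the kernel $\mathbb{Z}_{n_1}$, and once enough of these accumulate (at least $n_1$ of them, by a counting/pigeonhole argument on lengths) one applies $\mathsf{D}(\mathbb{Z}_{n_1})=n_1$ in the kernel to combine a nonempty collection into a zero-sum subsequence of $G$. The main obstacle here is the bookkeeping that guarantees one can extract enough disjoint zero-sum subsequences in the quotient from a sequence of length only $n_1+n_2-1$; this is exactly where the sharpness of the constant $n_1+n_2-1$ is used, and getting the counting tight (rather than off by one) is the delicate step of the argument.
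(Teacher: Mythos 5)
First, a remark on scope: the paper does not prove Theorem \ref{thm:Olson} at all --- it is quoted from Olson \cite{Olson69a}, \cite{Olson69b} and van Emde Boas--Kruyswijk \cite{vanEmdeBoasKruyswijk67} --- so your proposal can only be measured against the classical proofs. Your lower-bound construction (each generator $e_i$ repeated $n_i-1$ times) is correct and standard. Both upper-bound sketches, however, contain genuine gaps.

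In the $p$-group case the invariant you propose is simply wrong. The unsigned count of zero-sum subsequences of a sequence of length $1+\mathsf{d}^*(G)$ need not be divisible by $p$: over $G=\mathbb{Z}_p$ the all-ones sequence of length $p$ has exactly two zero-sum subsequences (the empty one and the full sequence), and $2\not\equiv 0 \pmod{p}$ for odd $p$. Worse, your $p$-adic mechanism has nothing to grip for odd $p$: if $\zeta\neq 1$ is a $p$-power root of unity then $1+\zeta=(1-\zeta^2)/(1-\zeta)$ is a unit above $p$ (squaring permutes primitive $p^j$-th roots of unity), and $1+1=2$ is a unit as well, so every factor $1+\chi(g_i)$ has valuation zero and the character sum $\frac{1}{|G|}\sum_\chi\prod_i\bigl(1+\chi(g_i)\bigr)$ is not forced to be divisible by anything. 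The quantity that \emph{is} divisible by $p$ is the signed count $\sum_{I\,:\,\sigma(I)=0}(-1)^{|I|}$, i.e.\ the coefficient of $1_G$ in $\prod_{i=1}^{\ell}(1-g_i)\in\mathbb{F}_p[G]$; this even-minus-odd invariant is exactly the shape of the Baker--Schmidt statement (Theorem \ref{thm:BS2}) used elsewhere in the paper. Olson's argument shows this product vanishes in $\mathbb{F}_p[G]$ as soon as $\ell>\mathsf{d}^*(G)$, via the identity $1-ab=(1-a)+a(1-b)$ and $(1-e_j)^{p^{\alpha_j}}=1-e_j^{p^{\alpha_j}}=0$, followed by pigeonhole; since the empty set contributes $+1$ to the signed count, divisibility by $p$ forces a nonempty zero-sum subsequence. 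No valuation bookkeeping in $\mathbb{Z}[\zeta]$ is needed, and for non-elementary $p$-groups term-by-term valuation bounds would in any case be too weak to reach the sharp threshold.

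In the rank-$2$ case your counting scheme fails by far more than one. Take $G=\mathbb{Z}_n\oplus\mathbb{Z}_n$ (so $n_1=n_2=n$) and a sequence of length $2n-1$ all of whose elements have second coordinate $1$. Any nonempty subsequence whose image in the quotient $\mathbb{Z}_{n_2}$ is zero-sum has length divisible by $n$, hence length exactly $n$ since $2n-1<2n$; consequently one can collect at most \emph{one} disjoint such piece, whereas your plan needs $n_1=n$ of them before $\mathsf{D}(\mathbb{Z}_{n_1})=n_1$ can be applied in the kernel. No pigeonhole on lengths can repair this: for this sequence, producing a zero-sum subsequence means choosing \emph{which} $n$ of the $2n-1$ first coordinates sum to $0\pmod n$, which is precisely the Erd\H{o}s--Ginzburg--Ziv theorem --- a tool absent from your outline. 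Olson's actual proof does not project onto the large cyclic factor; it passes to a quotient isomorphic to $\mathbb{Z}_p\oplus\mathbb{Z}_p$ for a prime $p$, uses EGZ-type results that guarantee zero-sum subsequences of \emph{bounded length} in that quotient, and runs a delicate induction on $|G|$. That control on the lengths of the extracted pieces is the essential missing ingredient in your sketch.
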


The value of $\Dav{G}$ is unknown in general.  For a survey of results, see the work of A.~Geroldinger \cite{Geroldinger09} and the work of A.~Geroldinger and F.~Halter-Koch \cite{GeroldingerHalter-Koch06}.  Recently, B.~Girard \cite{Girard18} has shown that for all integers $r \geq 1$, $\Dav{\Z_n^r}\sim rn$ as $n \rightarrow \infty$. 

We now introduce our object of study.  Let $(A,+,\cdot)$ be a finite commutative ring. For any positive integer $m$ and any sequence $S=(a_1,\dots,a_\ell)$ over $A$, we set
$$e_m(S):=\displaystyle\sum_{1 \le i_1 < \dots < i_m \le \ell} \displaystyle\prod^m_{j=1} a_{i_j}.$$
We say that $S$ is an \em $m$-zero sequence \em whenever $e_m(S)=0$, and that it is an \em $m$-zero free sequence \em whenever, for every subsequence $S'$ of $S$ such that $|S'| \ge m$, one has $e_m(S') \neq 0$. 
We denote by $\D{}{m}{A}$ the smallest positive integer $t$ such that every sequence $S$ over $A$ of length $|S| \ge t$ contains a subsequence $S'$ of length $|S'| \ge m$ for which $e_m(S')=0$.

Notice that when $m=1$ we recover the classical Davenport constant discussed above.  As a result, we may consider $\D{}{m}{A}$ as the {\em $m^{th}$-degree Davenport constant}.

In this paper we examine this higher degree Davenport constant.  This line of investigation that we follow is suggested by the work of A.~Bialostocki and T.D.~Luong \cite{BialostockiLuong09}, \cite{BialostockiLuong14}, and T.~Ahmed, A.~Bialostocki, T. Pham and Le Anh Vinh \cite{AhmedBialostockiPhamLe19}.

We proceed as follows. In Section \ref{section:Baker-Schmidt} we examine the higher degree Davenport constant in the case that $A=\Z_n$.  Of particular use is a result of R. Baker and W. Schmidt \cite{Baker-Schmidt80} (and see also \cite{Baker-Schmidt81}).  We obtain a precise result in the case that $n$ is a prime power and $m$ is power of the same prime.   In Section \ref{section:pgroups} we give an upper bound for the higher degree Davenport constant in the case that $A$ is of the form $\pgroup{r}$ and a lower bound for any $A$ of the form $\abeliangroup{b}$ from which we deduce a sharp value of the higher Davenport constant for rings of the form $\pgroup{r}$ when $m$ is also a power of $p$.  In Section \ref{section:Girard-Newton} we show how to use the classical Girard-Newton formulae, which allow one to express the elementary symmetric polynomial of degree $k$ by a combination of power sum polynomials, to obtain upper bounds.  In Section \ref{section:conclusion} we present some open problems.

\section{Bounds for cyclic groups}\label{section:Baker-Schmidt}

First, we note an easy lower bound on $\D{\elem{m}{\bf x}}{m}{\Z_n}$.  Consider the sequence ${\bf 1}:=(1,\ldots ,1)$ of length $t$.  If $t=m$, then the only subsequence of length at least $m$ is the given sequence itself and $\elem{m}{\bf 1}=1 \not \equiv 0 \pmod{n}$.  Further, suppose that for each $\ell$ with $t > \ell \geq m$ we have ${\ell \choose m} \not \equiv 0 \pmod{n}$.  Then there exists no subsequence of ${\bf 1}$ of length at least $m$ which evaluates to zero modulo $n$.  Thus, we define $\Low{n}{m}$ to be the smallest integer $t \geq m+1$ such that ${t \choose m} \equiv 0 \pmod{n}$.  We have

\begin{equation}\label{eqn:L1sequence}
\D{\elem{m}{\bf x}}{m}{\Z_n} \geq \Low{n}{m}.
\end{equation}

Throughout the remainder of this section, let $n=p^{r}=q$.

An $s$-tuple $\evector{s}$ with each $\varepsilon_i=0$ or $1$ will be called {\it idempotent}.  Whenever $\varepsilon_1+\cdots + \varepsilon_s$ is even (respectively, odd), an idempotent $s$-tuple will be called {\it even} (respectively, {\it odd}). Further, for a fixed $m$, an idempotent $s$-tuple will be called ${\it m-artificial}$ (or just {\it artificial} when $m$ is clear) whenever $\varepsilon_1+\cdots + \varepsilon_s \leq m-1$, i.e. the number of $\varepsilon_i$ that take on the value $1$ is strictly less than $m$.

We will apply the following theorem of R.C. Baker and W.M. Schmidt \cite{Baker-Schmidt80, Baker-Schmidt81}.

\begin{thm}\label{thm:BS2}[R.C. Baker, W.M. Schmidt \cite{Baker-Schmidt80, Baker-Schmidt81}] Suppose that ${\cal F}_1, \ldots , {\cal F}_{\ell}$ are polynomials in ${\bf x}=(x_1, \ldots , x_s)$ with coefficients in respective $p$-groups $G_1, \ldots , G_{\ell}$, and of respective degrees $d_1, \ldots , d_{\ell}$.  Write $A$ or $B$, respectively, for the number of even or the number of odd idempotent solutions of

\begin{equation*}
{\cal F}_1({\bf{\varepsilon}})=0, \ldots, {\cal F}_{\ell}({\bf{\varepsilon}})=0.
\end{equation*}

If 
\begin{equation*}
s > d_1(\Dav{G_1}-1)+\cdots +d_s(\Dav{G_{\ell}}-1),
\end{equation*}
then 
\begin{equation*}
A \equiv B \pmod{p}.
\end{equation*}
\end{thm}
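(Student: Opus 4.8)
The plan is to recast the quantity $A-B$ as an alternating sum over the Boolean cube and then to kill it by a finite-difference argument, once all the constraints have been encoded in a single low-degree polynomial modulo $p$. For each $\varepsilon=\evector{s}\in\{0,1\}^s$, let $\chi_j(\varepsilon)\in\{0,1\}$ equal $1$ precisely when $\mathcal F_j(\varepsilon)=0$ in $G_j$. Then each idempotent solution contributes $\prod_{j=1}^\ell \chi_j(\varepsilon)=1$, and weighting even solutions by $+1$ and odd ones by $-1$ gives the integer identity $A-B=\sum_{\varepsilon\in\{0,1\}^s}(-1)^{\varepsilon_1+\cdots+\varepsilon_s}\prod_{j=1}^\ell\chi_j(\varepsilon)$. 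The only fact I need about such sums is the multilinear finite-difference identity: for $T\subseteq\{1,\dots,s\}$ one has $\sum_{\varepsilon\in\{0,1\}^s}(-1)^{\varepsilon_1+\cdots+\varepsilon_s}\prod_{i\in T}\varepsilon_i=0$ unless $T=\{1,\dots,s\}$. Hence if $\prod_j\chi_j$ agrees on $\{0,1\}^s$ with a polynomial over $\F_p$ of total degree strictly less than $s$, its multilinear reduction is a combination of monomials $\prod_{i\in T}\varepsilon_i$ with $T\neq\{1,\dots,s\}$, the alternating sum vanishes in $\F_p$, and $A\equiv B\pmod p$ follows.

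Everything therefore reduces to the following degree bound, which is where the Davenport constant enters: the characteristic function $\chi_j$ coincides on $\{0,1\}^s$ with a polynomial over $\F_p$ of total degree at most $d_j(\Dav{G_j}-1)$. Granting this, $\prod_{j=1}^\ell\chi_j$ has degree at most $\sum_{j=1}^\ell d_j(\Dav{G_j}-1)$, which by hypothesis is less than $s$, so the previous paragraph finishes the proof. Note that the multiplicativity $\prod_j\chi_j$ across distinct equations is exactly what makes the hypothesis take the additive shape appearing in the statement.

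To establish the degree bound I would work inside the group algebra $\F_p[G_j]$, writing $[g]$ for the basis element attached to $g\in G_j$ and using the augmentation ideal $I_j$ generated by the elements $[g]-1$. Since $G_j$ is a $p$-group, $I_j$ is nilpotent with nilpotency index equal to $\Dav{G_j}$: writing $G_j\cong\bigoplus_k \Z_{p^{a_k}}$ gives $\F_p[G_j]\cong\bigotimes_k \F_p[z_k]/((z_k-1)^{p^{a_k}})$, whence $I_j^{\,t}=0$ precisely once $t\ge 1+\sum_k(p^{a_k}-1)=\Dav{G_j}$ (recall $\Dav{G_j}=1+\mathsf d^*(G_j)$ by Theorem~\ref{thm:Olson}). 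Expand $\mathcal F_j=\sum_T c_T\prod_{i\in T}x_i$ over monomials with $|T|\le d_j$ and $c_T\in G_j$. For $\varepsilon\in\{0,1\}^s$ the value $\mathcal F_j(\varepsilon)=\sum_T c_T\prod_{i\in T}\varepsilon_i$ lifts to the product $[\mathcal F_j(\varepsilon)]=\prod_T\bigl(1+(\textstyle\prod_{i\in T}\varepsilon_i)([c_T]-1)\bigr)$, using $\prod_{i\in T}\varepsilon_i\in\{0,1\}$ together with $[0]=1$. On expanding, any term selecting $[c_T]-1$ from $t$ of the factors lies in $I_j^{\,t}$ and so vanishes once $t\ge\Dav{G_j}$; each surviving term selects at most $\Dav{G_j}-1$ factors and hence carries an $\varepsilon$-monomial of degree at most $(\Dav{G_j}-1)d_j$. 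Thus $\varepsilon\mapsto[\mathcal F_j(\varepsilon)]$ is given coordinatewise in the group basis by $\F_p$-polynomials of degree at most $d_j(\Dav{G_j}-1)$, and reading off the coefficient of $[0]$ recovers $\chi_j(\varepsilon)$, proving the bound.

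The main obstacle is precisely this lemma: one must recognize that the effective polynomial degree of the indicator of $\mathcal F_j=0$ is controlled by the nilpotency index of the augmentation ideal, which for a $p$-group is $\Dav{G_j}$. Once the group-algebra expansion is in place the nilpotency collapse does the work automatically, and the remaining finite-difference step is routine. The only points that demand care are checking that passing from integer counts to $\F_p$ is harmless (the $A$ and $B$ are honest integers reduced mod $p$) and that the multilinear reduction $\varepsilon_i^2=\varepsilon_i$ never raises the total degree.
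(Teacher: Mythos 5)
Your proposal addresses a statement that the paper itself does not prove: Theorem~\ref{thm:BS2} is quoted from Baker and Schmidt \cite{Baker-Schmidt80, Baker-Schmidt81} with no internal proof, so the only comparison available is with the literature. Judged on its own terms, your argument is correct, and it essentially reconstructs the known group-algebra proof. The three pillars all hold: (i) the identity $A-B=\sum_{\varepsilon\in\{0,1\}^s}(-1)^{\varepsilon_1+\cdots+\varepsilon_s}\prod_j\chi_j(\varepsilon)$, together with the fact that the alternating sum of any monomial $\prod_{i\in T}\varepsilon_i$ with $T\neq\{1,\dots,s\}$ vanishes (pair each $\varepsilon$ with its flip in a coordinate outside $T$); (ii) multilinear reduction via $\varepsilon_i^2=\varepsilon_i$ never raises total degree, so a representative of $\prod_j\chi_j$ of degree less than $s$ forces $A\equiv B\pmod p$; (iii) the degree bound on $\chi_j$, which is the real content. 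Your treatment of (iii) is sound: writing $\pi_T(\varepsilon)=\prod_{i\in T}\varepsilon_i\in\{0,1\}$, one has $[c_T]^{\pi_T(\varepsilon)}=1+\pi_T(\varepsilon)([c_T]-1)$ in $\F_p[G_j]$, and in the expansion of $\prod_T\bigl(1+\pi_T(\varepsilon)([c_T]-1)\bigr)$ every term selecting at least $\Dav{G_j}$ factors dies, because the augmentation ideal of the group algebra of a $p$-group in characteristic $p$ has nilpotency index $1+\mathsf{d}^*(G_j)$, which equals $\Dav{G_j}$ by Theorem~\ref{thm:Olson}; the coefficient of $[0]$ in the basis element $[\mathcal{F}_j(\varepsilon)]$ is then exactly $\chi_j(\varepsilon)$, realized by a polynomial of degree at most $d_j(\Dav{G_j}-1)$. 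Note that you only need the direction ``nilpotency index $\le\Dav{G_j}$,'' and that invoking Theorem~\ref{thm:Olson} is legitimate rather than circular, since the paper quotes it as known. In substance this is the original Baker--Schmidt argument, which works with the single group-algebra sum $\sum_{\varepsilon}(-1)^{\varepsilon_1+\cdots+\varepsilon_s}\,[\mathcal{F}_1(\varepsilon)]\cdots[\mathcal{F}_\ell(\varepsilon)]$ in $\F_p[G_1\oplus\cdots\oplus G_\ell]$ and expands once, rather than first extracting a low-degree representative of each indicator $\chi_j$; your per-equation formulation is more modular and, as you observe, makes the additive shape of the hypothesis transparent. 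One cosmetic remark: the hypothesis should read $s>d_1(\Dav{G_1}-1)+\cdots+d_\ell(\Dav{G_\ell}-1)$ (the subscript $d_s$ in the paper's statement is a typo), and this corrected form is precisely what your proof uses.
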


To facilitate an application of Theorem \ref{thm:BS2} to our specific setting, we define for integers $q=p^{r}$ and $m$ the function $\Up{q}{m}$ to be the smallest integer $t \geq m(q-1)+1$ such that 

$$\sum_{0 \leq 2j \leq m-1} {t \choose 2j} \not \equiv \sum_{1 \leq 2j+1 \leq m-1} {t \choose 2j+1}  \pmod{p}. $$

Furthermore, for integers $n$ and $m$ we denote the set of all $m$-artificial idempotent $n$-tuples with $\varepsilon_1+\cdots + \varepsilon_n$ equal to an even (odd) integer by $\Even{n}{m}$ ($\Odd{n}{m}$).  Clearly, we have $$|\Even{n}{m}|= \sum_{0 \leq 2j \leq m-1} {t \choose 2j} ~~~\text{and}~~~ \newline |\Odd{n}{m}| = \sum_{1 \leq 2j+1 \leq m-1} {t \choose 2j+1}.$$

\begin{thm}\label{thm:cyclicbound}
Let $r$ be a non-negative integer, $p$ a prime, $q=p^r$ and $m \geq 1$.  We have $$\Low{q}{m} \leq \D{\elem{m}{\bf x}}{m}{\Z_q} \leq \Up{q}{m}.$$
\end{thm}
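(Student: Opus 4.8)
The lower bound $\Low{q}{m} \leq \D{\elem{m}{\bf x}}{m}{\Z_q}$ is immediate: it is exactly inequality~\eqref{eqn:L1sequence} specialized to $n = q$, witnessed by the all-ones sequence of length $\Low{q}{m}-1$. So the entire content of the statement is the upper bound, and the plan is to extract it from Theorem~\ref{thm:BS2}. First I would fix an arbitrary sequence $S = (a_1, \ldots, a_t)$ over $\Z_q$ with $t = \Up{q}{m}$ and encode subsequence selection by idempotent variables. Setting
$$\mathcal{F}(\varepsilon_1, \ldots, \varepsilon_t) := e_m(\varepsilon_1 a_1, \ldots, \varepsilon_t a_t) = \sum_{1 \le i_1 < \cdots < i_m \le t} \varepsilon_{i_1}\cdots\varepsilon_{i_m}\, a_{i_1}\cdots a_{i_m},$$
one obtains a single polynomial of degree $m$ whose coefficients lie in the $p$-group $\Z_q$. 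The crucial feature is that for any idempotent tuple $\evector{t}$, evaluating $\mathcal{F}$ returns $e_m(S')$, where $S'$ is the subsequence consisting of those $a_i$ with $\varepsilon_i = 1$. Thus a subsequence of length $\ge m$ with vanishing $e_m$ is precisely a non-artificial idempotent zero of $\mathcal{F}$, and the goal becomes producing one such zero.

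Next I would record that every $m$-artificial idempotent tuple is automatically a zero of $\mathcal{F}$: if fewer than $m$ of the $\varepsilon_i$ equal $1$, the associated subsequence has length $< m$ and $e_m$ of it is the empty sum $0$. Consequently, among the idempotent zeros of $\mathcal{F}$, the even ones contain all of $\Even{t}{m}$ and the odd ones contain all of $\Odd{t}{m}$, whose cardinalities are the two binomial sums appearing in the definition of $\Up{q}{m}$. Applying Theorem~\ref{thm:BS2} with $\ell = 1$, $d_1 = m$, $G_1 = \Z_q$ and $\Dav{\Z_q} = q$ (by Theorem~\ref{thm:Olson}), the hypothesis $s = t > m(q-1)$ --- guaranteed since $\Up{q}{m} \ge m(q-1)+1$ --- yields $A \equiv B \pmod{p}$, where $A$ and $B$ count all even and all odd idempotent zeros of $\mathcal{F}$, respectively.

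Finally I would close with a parity contradiction. Suppose $S$ had no subsequence of length $\ge m$ on which $e_m$ vanishes; then $\mathcal{F}$ has no non-artificial idempotent zero, so its even and odd zeros are exactly $\Even{t}{m}$ and $\Odd{t}{m}$, giving $A = |\Even{t}{m}|$ and $B = |\Odd{t}{m}|$. The congruence $A \equiv B \pmod{p}$ then forces $|\Even{t}{m}| \equiv |\Odd{t}{m}| \pmod{p}$, contradicting the defining property of $t = \Up{q}{m}$. Hence the desired subsequence exists, which shows $\D{\elem{m}{\bf x}}{m}{\Z_q} \le \Up{q}{m}$. The one delicate point --- and the step I expect to require the most care --- is the bookkeeping matching the Baker--Schmidt counts $A, B$ against the binomial sums defining $\Up{q}{m}$: one must verify both that, under the contradiction hypothesis, the artificial tuples exhaust the idempotent zeros, and that the even/odd classification used there coincides with the parity of $\varepsilon_1 + \cdots + \varepsilon_t$ employed in Theorem~\ref{thm:BS2}.
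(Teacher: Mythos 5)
Your proposal is correct and follows essentially the same route as the paper: encode subsequence selection by idempotent variables in the degree-$m$ polynomial $\sum_{i_1<\cdots<i_m} a_{i_1}\cdots a_{i_m}x_{i_1}\cdots x_{i_m}$, observe that all $m$-artificial tuples are zeros, and invoke Theorem~\ref{thm:BS2} together with the defining property of $\Up{q}{m}$ to force a non-artificial idempotent zero. Your write-up is in fact slightly more explicit than the paper's (spelling out the hypothesis check $t > m(q-1)$ via $\Dav{\Z_q}=q$ and the final parity contradiction), but the argument is the same.
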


\begin{proof}The lower bound was established above.  We establish the upper bound.

For a sequence $S=(a_1, \ldots ,a_{\ell})$ as opposed to seeking subsequences $S'$ of length at least $m$ such that $e_m(S') \equiv 0 \pmod{q}$, we may seek idempotent solutions that are not $m$-artificial to the following polynomial equation,

$$ \displaystyle\sum_{1 \le i_1 < \dots < i_m \le \ell} \displaystyle\prod^m_{j=1} a_{i_j}x_{i_j} \equiv 0 \pmod{q}.$$

To prove the upper bound, consider this degree-$m$ polynomial equation when the number of variables is $\Up{q}{m} \geq m(q-1)+1$, i.e. $\ell \geq m(q-1)+1$.  

Clearly, all $m$-artificial idempotent ${\Up{q}{m}}$-tuples are solutions to this equation since each monomial of the polynomial is a product of $m$ variables (and so at least one variable in each monomial evaluates as $0$ and so each monomial evaluates as  $0$).  From these solutions, we know that the number of even idempotent solutions $A$ is at least $\Even{\Up{q}{m}}{m}$ and the number of odd idempotent solutions $B$ is at least $\Odd{\Up{q}{m}}{m}$.  By the definition of $\Up{q}{m}$, we have that $|\Even{\Up{q}{m}}{m}| \not \equiv |\Odd{\Up{q}{m}}{m}| \pmod{p}$.  Thus, by Theorem \ref{thm:BS2}, there exists an idempotent solution that is not $m$-artificial. 
\end{proof}

\subsection{Properties of $\Up{q}{m}$ and $\Low{q}{m}$}

The lower bound $\Low{q}{m}$ and upper bound $\Up{q}{m}$ provided in Theorem \ref{thm:cyclicbound} motivate us to a numerical understanding of these functions in order to make them effective.

We begin with an investigation of $\Up{q}{m}$.  

Using Pascal's Identity and induction, one may show that $$\sum_{0 \leq 2j \leq m-1} {t \choose 2j} - \sum_{1 \leq 2j+1 \leq m-1} {t \choose 2j+1} = (-1)^{m-1} {t-1 \choose m-1}.$$  Thus, an alternate definition of $\Up{q}{m}$ is the smallest integer  $t \geq m(q-1)+1$ such that ${t-1 \choose m-1} \not \equiv 0 \pmod{p}$.  The former definition naturally arises in the proof of Theorem \ref{thm:cyclicbound} while the latter we use below.

We recall some classical results in number theory from the 19th-century.

Let $p$ be a prime number and $n > 1$ an integer.  The {\it $p$-adic valuation of $n$}, denoted $\nu_p(n)$, is the exponent of $p$ in the canonical decomposition in prime numbers of $n$ (and if $p$ does not divide $n$, then $\nu_p(n)=0$).  The base-$p$ expansion of $n$ is written as such, $n=a_kp^k+a_{k-1}p^{k-1}+\cdots +a_1p+a_0$.  Let $s_p(n) = a_k+a_{k-1}+\cdots +a_1+a_0$.

\begin{thm}[A.-M.~Legendre, 1808 \cite{Legendre1808}]
Let $p$ be a prime and let $n$ be a positive integer.  Then

$$\nu_p(n!) = \frac{n-s_p(n)}{p-1}.$$
\end{thm}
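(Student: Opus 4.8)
The plan is to proceed in two stages: first establish the classical floor-sum (Legendre) formula for $\nu_p(n!)$, and then convert that sum into the stated digit-sum expression.

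First I would count the power of $p$ dividing $n! = 1 \cdot 2 \cdots n$ by tallying, with multiplicity, how many factors of $p$ appear across the integers $1, \ldots, n$. Among these, exactly $\lfloor n/p \rfloor$ are divisible by $p$, exactly $\lfloor n/p^2 \rfloor$ are divisible by $p^2$, and in general exactly $\lfloor n/p^i \rfloor$ are divisible by $p^i$. An integer divisible by $p^i$ but no higher power contributes one unit to each of the first $i$ of these counts, so summing over $i$ records each factor of $p$ exactly once with its correct multiplicity. This yields
$$\nu_p(n!) = \sum_{i \geq 1} \left\lfloor \frac{n}{p^i} \right\rfloor,$$
a finite sum, since $\lfloor n/p^i \rfloor = 0$ once $p^i > n$.

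Next I would substitute the base-$p$ expansion $n = \sum_{j=0}^{k} a_j p^j$. For each $i$ one has $\lfloor n/p^i \rfloor = \sum_{j \geq i} a_j p^{j-i}$, because the digits in positions below $i$ contribute a value strictly less than $p^i$ and are therefore discarded by the floor. Inserting this identity and interchanging the order of summation gives
$$\sum_{i=1}^{k} \left\lfloor \frac{n}{p^i} \right\rfloor = \sum_{j=1}^{k} a_j \sum_{i=1}^{j} p^{\,j-i} = \sum_{j=0}^{k} a_j \, \frac{p^{\,j} - 1}{p-1},$$
where I have summed the inner geometric series and then freely reinstated the $j=0$ term, which is zero. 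Separating the factor $(p^{\,j}-1)/(p-1)$ and recalling that $\sum_{j} a_j p^{\,j} = n$ while $\sum_{j} a_j = s_p(n)$, the right-hand side collapses to $(n - s_p(n))/(p-1)$, which completes the proof.

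The main obstacle is the first stage: making the counting argument precise so that every factor of $p$ in $n!$ is recorded exactly once and with its true multiplicity, rather than over- or under-counted. The second stage is purely a manipulation of a double sum and a geometric series; its only delicate points are justifying the truncation of $\lfloor n/p^i\rfloor$ to the high-order digits and keeping track of the harmless vanishing $j=0$ term.
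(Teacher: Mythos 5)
Your proof is correct, but there is no proof in the paper to compare it against: the statement appears there as a classical theorem of Legendre, cited to the 1808 original, and is used without proof as a stepping stone to Kummer's theorem and to the valuation identity $\nu_p\binom{n}{m} = \bigl(s_p(m)+s_p(n-m)-s_p(n)\bigr)/(p-1)$ that the authors apply repeatedly. Your argument is the standard one and both stages are sound: in the first stage, each integer $m \le n$ is counted once for every $i \le \nu_p(m)$, hence with total multiplicity $\nu_p(m)$, which legitimizes $\nu_p(n!)=\sum_{i\ge 1}\left\lfloor n/p^i\right\rfloor$; in the second stage, the truncation $\left\lfloor n/p^i\right\rfloor=\sum_{j\ge i}a_jp^{j-i}$ is justified because the low-order digits contribute strictly less than $p^i$, and the interchange of the finite double sum followed by the geometric series $\sum_{i=1}^{j}p^{j-i}=(p^j-1)/(p-1)$ collapses everything to $(n-s_p(n))/(p-1)$. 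It is worth noting that discarded draft material left in the paper's source after the document body ends carries out precisely your first-stage counting argument in the special case $n=p^s$, obtaining $\nu_p(p^s!)=(p^s-1)/(p-1)$, so your route is consistent with how the authors themselves evidently work with this result.
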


Legendre's Theorem was used to establish the following.

\begin{thm}[E.~Kummer, 1852 \cite{Kummer1852}]
The $p-$adic valuation of the binomial coefficient ${n \choose m}$ is equal to the number of `carry-overs' when performing the addition in base $p$ of $n-m$ and $m$. 
\end{thm}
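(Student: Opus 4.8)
The plan is to deduce Kummer's theorem directly from Legendre's theorem together with a counting argument on base-$p$ carries. First I would write $\binom{n}{m} = \tfrac{n!}{m!\,(n-m)!}$, so that
$$\nu_p\!\left(\binom{n}{m}\right) = \nu_p(n!) - \nu_p(m!) - \nu_p((n-m)!).$$
Applying Legendre's theorem to each of the three factorials, and observing that the contributions $n$, $m$, $n-m$ to the numerators cancel because $m + (n-m) = n$, this collapses to
$$\nu_p\!\left(\binom{n}{m}\right) = \frac{s_p(m) + s_p(n-m) - s_p(n)}{p-1}.$$
It therefore remains only to interpret the right-hand side in terms of carry-overs.

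The key lemma, which I regard as the heart of the argument, is the digit-sum identity: if $a$ and $b$ are non-negative integers and the base-$p$ addition of $a$ and $b$ produces $C$ carry-overs in total, then
$$s_p(a) + s_p(b) - s_p(a+b) = (p-1)\,C.$$
To prove this I would write the base-$p$ digits of $a$, $b$, and $a+b$ as $a_i$, $b_i$, $d_i$, and let $c_i \in \{0,1\}$ denote the carry out of position $i$, with the convention $c_{-1}=0$ and all but finitely many $c_i$ equal to $0$. The definition of base-$p$ addition gives the per-position relation $a_i + b_i + c_{i-1} = d_i + p\,c_i$. Summing this identity over all $i$, the carry terms telescope, since $\sum_i c_{i-1} = \sum_i c_i = C$ (the indices merely shift while the boundary carries vanish). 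This yields $s_p(a) + s_p(b) + C = s_p(a+b) + pC$, which rearranges to the claimed identity.

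Finally, applying the lemma with $a = m$ and $b = n-m$, so that $a+b = n$, turns the numerator $s_p(m) + s_p(n-m) - s_p(n)$ into $(p-1)C$, where $C$ is exactly the number of carries in the base-$p$ addition of $m$ and $n-m$. Dividing by $p-1$ then gives $\nu_p\!\left(\binom{n}{m}\right) = C$, which is the assertion of the theorem. The only genuine obstacle is the carry lemma; once the per-position relation $a_i + b_i + c_{i-1} = d_i + p\,c_i$ is set up correctly, the telescoping of the carry sums is the single step one must take care over, and everything else is routine bookkeeping.
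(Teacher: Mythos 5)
Your proof is correct and follows essentially the same route the paper indicates: apply Legendre's theorem to $\nu_p(n!)-\nu_p(m!)-\nu_p((n-m)!)$ to obtain the digit-sum identity $\nu_p\bigl(\binom{n}{m}\bigr)=\frac{s_p(m)+s_p(n-m)-s_p(n)}{p-1}$, which is exactly the paper's intermediate step (Equations \ref{eqn:Kummer}--\ref{eqn:Kummer2}), and then translate the numerator into a carry count. Your telescoping argument for the per-position relation $a_i+b_i+c_{i-1}=d_i+p\,c_i$ correctly supplies the one detail the paper leaves implicit, so nothing is missing.
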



When one uses Legendre's Theorem to prove Kummer's Theorem, an intermediate step gives 

\begin{eqnarray}\label{eqn:Kummer}
\nu_p({n \choose m})  &= &\nu_p(n!)-\nu_p(m!)-\nu_p((n-m)!)\\
		& = & \frac{s_p(m)+s_p(n-m)-s_p(n)}{p-1}.\label{eqn:Kummer2}
\end{eqnarray}

We repeatedly use this identity in the proofs given below. \\

\begin{prop}\label{prop:Up}
For an integer $m \geq 1$, a prime $p$ and $q$ a power of $p$, we have the following.
\begin{enumerate}
\item $m(q-1)+1 \leq \Up{q}{m} \leq mq.$

\item For $p \geq 2m-1, \Up{q}{m} = m(q-1)+1$.
\item For $m \leq p \leq 2m-2, \Up{q}{m} = mq+m-p.$
\item For $p \geq m$, the roots of ${t-1 \choose m-1} \in \mathbb{Z}_p[t]$ are $1, 2, \ldots , m-1.$ 

\end{enumerate}
\end{prop}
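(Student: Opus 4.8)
The plan is to work throughout with the alternate description of $\Up{q}{m}$ recorded just before the proposition: $\Up{q}{m}$ is the least integer $t \ge m(q-1)+1$ with $\binom{t-1}{m-1} \not\equiv 0 \pmod p$. The lower bound in part (1) is then immediate, since by definition $\Up{q}{m} \ge m(q-1)+1$. I would treat part (4) first, as it drives (2) and (3). Viewing $\binom{t-1}{m-1} = \frac{1}{(m-1)!}\prod_{j=1}^{m-1}(t-j)$ as an element of $\Z_p[t]$, and noting that when $p \ge m$ the factor $(m-1)!$ is a unit modulo $p$ (all of its factors lie in $[1,m-1]\subseteq[1,p-1]$), one sees at once that over $\Z_p$ this polynomial is a unit times $\prod_{j=1}^{m-1}(t-j)$, whose roots are exactly the distinct residues $1,2,\dots,m-1$. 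This proves (4).

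The same factorization turns parts (2) and (3) into a residue computation. Since the numerator $\prod_{j=1}^{m-1}(t-j)$ has fewer than $p$ consecutive factors, at most one of them is divisible by $p$, so whether $\binom{t-1}{m-1}$ vanishes modulo $p$ depends only on $t \bmod p$: it vanishes precisely when $t \equiv 1,2,\dots,m-1 \pmod p$. Using $q \equiv 0 \pmod p$ gives $m(q-1)+1 \equiv 1-m \pmod p$. For (2), when $p \ge 2m-1$ the residue $1-m$ avoids $\{1,\dots,m-1\}$ (equivalently, none of $m,m+1,\dots,2m-2$ is divisible by $p$), so the lower endpoint $t=m(q-1)+1$ already works and $\Up{q}{m}=m(q-1)+1$. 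For (3), when $m \le p \le 2m-2$ the residue $1-m \equiv p+1-m$ lies in $\{1,\dots,m-1\}$, so the endpoint fails; incrementing $t$, the first admissible residue is $m$ (good, since $m \le p$), reached after $2m-p-1$ steps, yielding $\Up{q}{m}=m(q-1)+1+(2m-p-1)=mq+m-p$.

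The real work is the upper bound $\Up{q}{m}\le mq$ in part (1), which must hold for all $m$, in particular for $m>p$ where the residue argument above is unavailable. By Kummer's theorem (equivalently, the digit criterion of Lucas), $\binom{n}{m-1}\not\equiv 0 \pmod p$ exactly when each base-$p$ digit of $m-1$ is at most the corresponding digit of $n$; write $S_k$ for the set of such $n$ with $k=m-1$. Since the window $\{m(q-1),m(q-1)+1,\dots,mq-1\}$ consists of exactly $m$ consecutive integers, it suffices to show that no run of $m$ consecutive integers can entirely avoid $S_{m-1}$, for then one such $n$ gives $t=n+1\le mq$. I would establish the sharp statement that the longest run of consecutive integers omitted by $S_k$ equals $k$. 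Writing $k=c_0+pk'$ with $c_0=k\bmod p$, an integer $n$ lies in $S_k$ iff its lowest digit is $\ge c_0$ and $\lfloor n/p\rfloor\in S_{k'}$; analysing the blocks $[jp,jp+p-1]$ (such a block contributes its top $p-c_0$ elements to $S_k$ exactly when $j\in S_{k'}$) gives the recursion $f(k)=p\,f(k')+c_0$ for the maximal omitted run $f$, with base case $f(0)=0$. Induction then yields $f(k)=k$. Hence every block of $m=k+1$ consecutive integers meets $S_{m-1}$, in particular the window above does, establishing $\Up{q}{m}\le mq$.

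The main obstacle is precisely this gap lemma $f(k)=k$. Parts (2)--(4) reduce to a clean computation modulo $p$ because $p\ge m$ forces $\binom{t-1}{m-1}$ to behave like a polynomial in $t \bmod p$, whereas the general upper bound in (1) requires controlling the genuinely $p$-adic (Kummer/Lucas) behaviour of $\binom{n}{m-1}$ across an entire window of length $m$, which the block recursion is designed to accomplish.
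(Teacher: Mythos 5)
Your proposal is correct, and it diverges from the paper's proof in two instructive places. For part (4) you argue exactly as the paper does, via the factorization $\binom{t-1}{m-1}=\frac{1}{(m-1)!}\prod_{j=1}^{m-1}(t-j)$ with $(m-1)!$ a unit modulo $p$. For parts (2) and (3), the paper instead computes $p$-adic valuations directly from the Legendre--Kummer digit-sum identity, writing out the base-$p$ expansions of $m(q-1)$, $mq-p+m-1$, and $mq-p+m-1-j$ case by case; your route is to leverage part (4): since $p\geq m$, vanishing of $\binom{t-1}{m-1}$ modulo $p$ depends only on $t \bmod p$, with bad residues exactly $1,\dots,m-1$, and then both (2) and (3) follow from a short count of steps starting from $m(q-1)+1\equiv 1-m \pmod{p}$. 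This is cleaner and unifies the two cases that the paper treats by separate digit computations. The genuine difference is the upper bound in part (1). The paper's argument is a two-line finite-difference descent: if $\binom{n}{m-1}\equiv 0 \pmod{p}$ for the $m$ consecutive values $n=m(q-1),\dots,mq-1$, then repeated application of Pascal's rule in the form $\binom{n}{j-1}=\binom{n+1}{j}-\binom{n}{j}$ transfers these zeros down the lower index and forces $\binom{m(q-1)}{0}=1\equiv 0\pmod{p}$, a contradiction. Your approach via Lucas/Kummer and the block recursion $f(k)=p\,f(k')+c_0$ is also correct, and it proves the sharp structural statement $f(k)=k$ (the initial run $0,\dots,k-1$ is the longest possible run of vanishing values), which is strictly more information than the bound requires. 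But note that the paper's differencing argument is insensitive to the starting point of the window, so it already yields your gap lemma --- no $m$ consecutive integers $n$ can all satisfy $\binom{n}{m-1}\equiv 0\pmod{p}$ --- with far less machinery. In short: your treatment of (2)--(3) is lighter than the paper's, your treatment of (1) is heavier; the trade-off is that your (1) exhibits exactly where the nonvanishing binomial coefficients sit in base $p$, while the paper's buys brevity.
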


\begin{proof}
\begin{enumerate}
\item  The lower bound is by the definition.  Now assume that $\Up{q}{m} > m(q-1)+1$.  Consider the largest integer $T \geq m(q-1)+1$ such that ${t-1 \choose m-1} \equiv 0 \pmod{q}$ for all integers $m(q-1)+1 \leq t \leq T$.  The integer $T$ is well-defined by assumption, and we have $\Up{q}{m} = T+1$.  For the sake of contradiction, we assume that $T \geq mq$.  By definition, we have ${T-1 \choose m-1} \equiv \ldots \equiv {m(q-1) \choose m-1} \equiv 0 \pmod{p}$.  By Pascal's Rule, we obtain ${T-2 \choose m-2}\equiv \ldots \equiv {m(q-1) \choose m-2} \equiv 0 \pmod{p}$.  We may iterate the application of Pascal's Rule $m-1$ times to obtain $0 \equiv {m(q-1) \choose m-1} \equiv {m(q-1) \choose m-2} \equiv \ldots \equiv {m(q-1) \choose 1} \equiv {m(q-1) \choose 0} \equiv 1 \pmod{p}$, a contradiction.\\

\item By the definition of $\Up{q}{m}$, we must show that ${m(q-1)+1-1 \choose m-1} \not \equiv 0 \pmod p$.  We use Equation \ref{eqn:Kummer2} to show $\nu_{p}({m(q-1) \choose m-1})=0$.

Note that the base-$p$ expansion of $mq-m$ is $(m-1)p^{\alpha} + (p-1)p^{\alpha -1} +\cdots + (p-1)p+(p-m)$.  The base-$p$ expansion of $m-1$ is $(m-1)$ since $p \geq 2m-1$.  Subtracting, we find the base-$p$ expansion of $m(q-1)-(m-1)$ is  $(m-1)p^{\alpha}+(p-1)p^{\alpha -1 }+ \cdots + (p-1)p+(p-2m+1)$.  By Equation \ref{eqn:Kummer2}
\begin{flalign*}
\nu_p({m(q-1) \choose m-1}) &=  \frac{s_p(m-1)+s_p(m(q-1)-(m-1))-s_p(m(q-1))}{p-1}\\
					&=  \frac{(m-1)+[(m-1)+(p-1)(\alpha -1)+(p-2m+1)]}{p-1}\\
					&~~~~~~ -\frac{[(m-1)+(p-1)(\alpha -1)+(p-m)]}{p-1}\\
					&=  0.&&
\end{flalign*}

\item We begin by noting that the difference between the claimed value and the smallest $\Up{q}{m}$ allowed by the definition is $2m-p-1$.  Thus, by the definition of $\Up{q}{m}$, we must show that ${mq-p+m-1 \choose m-1} \not \equiv 0 \pmod p$ and that ${mq-p+m-1-j \choose m-1} \equiv 0 \pmod p$ for $1 \leq j \leq 2m-p-1$.  

We use Equation \ref{eqn:Kummer2} to first show $\nu_{p}({mq-p+m-1 \choose m-1})=0$.  Note that the base-$p$ expansion of $mq-p+m-1$ is $(m-1)p^{\alpha}+(p-1)p^{\alpha -1}+\cdots +(p-1)p+(m-1)$.  The base-$p$ expansion of $m-1$ is $(m-1)$ since $m \leq p$.  Subtracting, the base-$p$ expansion of $mq-p+m-1-(m-1)=mq-p$ is $(m-1)p^{\alpha}+(p-1)p^{\alpha -1}+ \cdots +(p-1)p+0$.  By Equation \ref{eqn:Kummer2}

\begin{flalign*}
&\nu_p({mq-p+m-1 \choose m-1})\\
&= \frac{s_p(m-1)+s_p(mq-p)-s_p(mq-p+m-1)}{p-1}\\
					&= \frac{(m-1)+[(m-1)+(p-1)(\alpha -1)]-[2(m-1)+(p-1)(\alpha -1)]}{p-1}\\
					&= 0.&&
\end{flalign*}

We now use Equation \ref{eqn:Kummer2} to show $\nu_{p}({mq-p+m-1-j \choose m-1}) \neq 0$ for $1 \leq j \leq 2m-p-1$.  First note that since $m \leq p$, we have $j \leq m-1$.   Note that the base-$p$ expansion of $mq-p+m-1-j$ is $(m-1)p^{\alpha}+(p-1)p^{\alpha -1}+\cdots +(p-1)p+(m-1)-j$.  The base-$p$ expansion of $m-1$ is $(m-1)$ since $m \leq p$.  Subtracting, the base-$p$ expansion of $mq-p+m-1-j-(m-1)=mq-p-j$ is $(m-1)p^{\alpha}+(p-1)p^{\alpha -1}+ \cdots +(p-2)p+(p-j)$.  By Equation \ref{eqn:Kummer2}

\begin{flalign*}
&\nu_p({mq-p+m-1-j \choose m-1})\\
				 &= \frac{s_p(m-1)+s_p(mq-p-j)-s_p(mq-p+m-1-j)}{p-1}\\
					&= \frac{(m-1)+[(m-1)+(p-1)(\alpha -1)-1+(p-j)]}{p-1}\\
					&~~~~~~~-\frac{[2(m-1)+(p-1)(\alpha -1)-j]}{p-1}\\
					&= 1.&&
\end{flalign*}

\item Consider ${t-1 \choose m-1}$ as a polynomial in $\Z_p[t]$.  Since $${t-1 \choose m-1} = \frac{(t-1)(t-2)\ldots(t-(m-1))}{(m-1)!},$$  this polynomial clearly is of degree $m-1$ with roots $1, 2 \ldots, m-1$.
\end{enumerate}
\end{proof}

We now give the value of $\Low{q}{m}$ in the case that $q$ and $m$ are powers of the same prime $p$.

\begin{prop}\label{prop:Low}
For a prime $p$ and integers $r$ and $s$, we have $\Low{p^r}{p^s} = p^{r+s}.$
\end{prop}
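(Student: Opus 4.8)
The plan is to translate the definition of $\Low{p^r}{p^s}$ into a statement about $p$-adic valuations and then read off that valuation combinatorially via Kummer's Theorem. By definition, $\Low{p^r}{p^s}$ is the smallest integer $t \geq p^s+1$ for which ${t \choose p^s} \equiv 0 \pmod{p^r}$, equivalently for which $\nu_p\!\left({t \choose p^s}\right) \geq r$. So I must locate the least $t > p^s$ whose binomial coefficient ${t \choose p^s}$ has $p$-adic valuation at least $r$.

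The key step is the following characterization. First I would set $a := t - p^s$ and invoke Kummer's Theorem, which says that $\nu_p\!\left({t \choose p^s}\right)$ equals the number of carries produced when adding $a$ and $p^s$ in base $p$. Since $p^s$ has the single nonzero base-$p$ digit $1$ in position $s$, this addition amounts to incrementing the digit of $a$ in position $s$ by one, and the resulting carries propagate upward exactly through the maximal run of digits equal to $p-1$ that begins in position $s$. Hence $\nu_p\!\left({t \choose p^s}\right) \geq r$ holds if and only if the base-$p$ digits of $a$ in positions $s, s+1, \ldots, s+r-1$ are all equal to $p-1$.

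With this ``if and only if'' in hand, minimizing $t$ is the same as minimizing $a$, and the smallest nonnegative integer $a$ whose digits in positions $s$ through $s+r-1$ all equal $p-1$ is obtained by setting every other digit to zero, namely $a = (p-1)(p^s + p^{s+1} + \cdots + p^{s+r-1}) = p^{r+s} - p^s$. This produces the candidate $t = a + p^s = p^{r+s}$, and a direct application of Equation \ref{eqn:Kummer2} (using $s_p(p^s)=1$, $s_p(p^{r+s}-p^s)=r(p-1)$, and $s_p(p^{r+s})=1$) confirms $\nu_p\!\left({p^{r+s} \choose p^s}\right)=r$, so $t=p^{r+s}$ satisfies the required congruence. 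Conversely, any $t$ with $p^s < t < p^{r+s}$ gives $a = t - p^s < p^{r+s}-p^s$, which is too small to carry all of the digits in positions $s, \ldots, s+r-1$ up to $p-1$, and therefore produces fewer than $r$ carries. Noting finally that $p^{r+s} \geq p^{s+1} > p^s$ respects the constraint $t \geq p^s+1$, these facts together yield $\Low{p^r}{p^s}=p^{r+s}$.

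The part requiring the most care is the carry analysis: I must justify that incrementing position $s$ produces exactly the run-length count of carries, and, crucially, that the configuration minimizing $a$ is the global minimizer of $t$ over all admissible $t$, rather than merely a convenient candidate. Once the characterization of when $\nu_p\!\left({t \choose p^s}\right) \geq r$ is pinned down, the extremal computation and the verification via Equation \ref{eqn:Kummer2} are routine.
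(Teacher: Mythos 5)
Your proof is correct, and it is organized differently from the paper's. The paper works entirely with the digit--sum identity (Equation \ref{eqn:Kummer2}): it computes $\nu_p\bigl(\binom{p^{r+s}}{p^s}\bigr)=r$ directly, and then, for each $t$ with $p^s+1 \le t < p^{r+s}$, it analyzes the borrow pattern in the base-$p$ subtraction $t-p^s$ to bound the sum of digit differences and conclude $\nu_p\bigl(\binom{t}{p^s}\bigr) \le r-1$. You instead invoke Kummer's Theorem in its carry-counting form (which the paper states but never actually uses in this proof) to obtain an exact characterization: $\nu_p\bigl(\binom{t}{p^s}\bigr) \ge r$ if and only if the digits of $a=t-p^s$ in positions $s,\dots,s+r-1$ all equal $p-1$, and then you finish with an extremal argument, noting that the smallest nonnegative integer with those prescribed digits is $p^{r+s}-p^s$. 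The two analyses are dual --- borrows in the subtraction $t - p^s$ versus carries in the addition $(t-p^s)+p^s$ --- but your structure (an ``iff'' description of the admissible set followed by minimization) replaces the paper's two separate verifications with a single conceptual step, and it makes the answer $p^{r+s}$ emerge as the visibly unique minimizer rather than as a candidate checked by computation; the paper's version, in exchange, needs nothing beyond Equation \ref{eqn:Kummer2} and routine digit bookkeeping. The one point you should make fully explicit if you write this up is the carry-propagation claim itself (that adding $p^s$ produces exactly as many carries as the length of the maximal run of digits $p-1$ starting at position $s$), which you correctly flag as the step needing care; it is a short induction on the run length, and with it in place your argument is complete.
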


\begin{proof} By definition, we must show that the smallest integer $t \geq p^s+1$ for which ${t \choose p^s} \equiv 0 \pmod{p^r}$ is $t=p^{r+s}$.  We must show that $\nu_{p}({t \choose p^s}) < r$ for $p^s+1 \leq t < p^{r+s}$ and that $\nu_{p}({p^{r+s} \choose p^s}) = r$.

We first show that $\nu_{p}({p^{r+s} \choose p^s}) = r$.  Note that the base-$p$ expansion of $p^{r+s}$ is $1p^{r+s}+0p^{r+s-1}+\cdots +0p+0$ and the base-$p$ expansion of $p^s$ is $1p^s+0p^{s-1}+\cdots +0p+0$.  Subtracting, the base-$p$ expansion of $p^{r+s}-p^s$ is  $(p-1)p^{r+s-1}+\cdots +(p-1)p^s+0p^{s-1}+\cdots +0p+0$.  By Equation \ref{eqn:Kummer2},

\begin{eqnarray*}
\nu_p({p^{r+s} \choose p^s}) & = &\frac{s_p(p^s)+s_p(p^{r+s}-p^s)-s_p(p^{r+s})}{p-1}\\
					& = & \frac{1+r(p-1)-1}{p-1}\\
					& = &r.
\end{eqnarray*}

We now show that $\nu_{p}({t \choose p^s}) < r$ for $p^s+1 \leq t < p^{r+s}$.  The base-$p$ expansion of $t$ is $t_{r+s-1}p^{r+s-1}+\cdots + t_1p+t_0$  and for $p^s$ is $1p^s+0p^{s-1}+\cdots +0p+0$.   Subtracting, the base-$p$ expansion of $t-p^s$ is $t'_{r+s-1}p^{r+s-1}+\cdots +t'_{s+1}p^{s+1}+t'_sp^s+t_{s-1}p^{s-1}+\cdots +t_1p+t_0$.  By Equation \ref{eqn:Kummer2}

\begin{eqnarray*}
\nu_{p}({t \choose p^s}) & = & \frac{s_p(p^s)+s_p(t-p^s)-s_p(t)}{p-1}\\
				& = & \frac{1+(t'_{r+s-1}+\cdots +t'_s+t_{s-1}+\cdots +t_1+t_0)-(t_{r+s-1}+\cdots +t_0)}{p-1}.
\end{eqnarray*}

After cancelling like terms, we obtain

\begin{eqnarray*}
\nu_{p}({t \choose p^s}) =  \frac{1 +(t'_{r+s-1}-t_{r+s-1})+\cdots +(t'_s-t_s)}{p-1}.
\end{eqnarray*}

By the rules of subtraction and as $t \geq p^s+1$, there exists an index $i$ with $s \leq i  \leq r+s-1$ for which $t'_i - t_i = -1$.  Thus,
\begin{eqnarray*}
  \nu_{p}({t \choose p^s})  &\leq & \frac{1-1+(r-1)(p-1)}{p-1} = r-1. \\
  \end{eqnarray*}
  
\end{proof}

\begin{thm}\label{thm:cyclicequality}
For integers $r$ and $s$, and a prime $p$, for $q=p^r$ we have $\D{\elem{p^s}{\bf x}}{p^s}{\Z_q}=p^{r+s}.$
\end{thm}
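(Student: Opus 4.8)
The plan is to prove the theorem by combining the three results immediately preceding it: the general sandwich bound of Theorem~\ref{thm:cyclicbound}, the exact computation of the lower bound $\Low{q}{m}$ in Proposition~\ref{prop:Low}, and the upper-bound estimates in Proposition~\ref{prop:Up}. Specializing to $q = p^r$ and $m = p^s$, Theorem~\ref{thm:cyclicbound} gives
\begin{equation*}
\Low{p^r}{p^s} \leq \D{\elem{p^s}{\bf x}}{p^s}{\Z_{p^r}} \leq \Up{p^r}{p^s}.
\end{equation*}
By Proposition~\ref{prop:Low} the left-hand side equals exactly $p^{r+s}$, so it suffices to show that the upper bound $\Up{p^r}{p^s}$ also equals $p^{r+s}$; the two bounds will then pinch the higher-degree Davenport constant to the common value $p^{r+s}$.

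First I would establish that $\Up{p^r}{p^s} = p^{r+s}$. Recall from the alternate definition derived via Pascal's Identity that $\Up{q}{m}$ is the smallest integer $t \geq m(q-1)+1$ with $\binom{t-1}{m-1} \not\equiv 0 \pmod{p}$. With $q = p^r$ and $m = p^s$, the threshold is $m(q-1)+1 = p^s(p^r-1)+1 = p^{r+s}-p^s+1$. The goal is therefore to show that $t = p^{r+s}$ is the first integer at or above $p^{r+s}-p^s+1$ for which $\binom{t-1}{p^s-1}\not\equiv 0 \pmod p$, i.e. that $\nu_p\!\left(\binom{t-1}{p^s-1}\right) \geq 1$ for every $t$ in the range $p^{r+s}-p^s+1 \leq t \leq p^{r+s}-1$, while $\nu_p\!\left(\binom{p^{r+s}-1}{p^s-1}\right) = 0$.

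The main computational step is a Kummer/Legendre base-$p$ analysis entirely parallel to the proof of Proposition~\ref{prop:Low}. For the equality at $t = p^{r+s}$, one writes $p^{r+s}-1 = (p-1)p^{r+s-1}+\cdots+(p-1)p+(p-1)$ and $p^s-1 = (p-1)p^{s-1}+\cdots+(p-1)$, and observes that the subtraction $(p^{r+s}-1)-(p^s-1)$ produces no carries in base $p$ (each digit of the subtrahend is dominated by the corresponding digit $p-1$ of the minuend); by Kummer's Theorem this yields $\nu_p\!\left(\binom{p^{r+s}-1}{p^s-1}\right)=0$, so the binomial coefficient is a unit mod $p$. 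For the intermediate values $p^{r+s}-p^s < t \leq p^{r+s}-1$, one writes $t-1$ in base $p$ and checks via Equation~\ref{eqn:Kummer2} that $s_p(p^s-1)+s_p((t-1)-(p^s-1)) > s_p(t-1)$, i.e. that at least one carry occurs, forcing $\nu_p \geq 1$; the key arithmetic fact is that in this range the lowest $s$ base-$p$ digits of $t-1$ cannot all equal $p-1$ simultaneously with the higher digits being maximal, so subtracting $p^s-1$ triggers a borrow into the $p^s$ place. I expect this borrow/carry bookkeeping across the boundary $t = p^{r+s}-p^s+1$ to be the one delicate point, since one must verify the carry occurs for the entire interval and disappears exactly at $t=p^{r+s}$; however, this is precisely the content already carried out for $\Low{p^r}{p^s}$ in Proposition~\ref{prop:Low}, and indeed one can sidestep the interval check by invoking that proposition directly. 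Once $\Up{p^r}{p^s}=p^{r+s}$ is in hand, the sandwich closes and the theorem follows immediately.
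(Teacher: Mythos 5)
Your proposal is correct and rests on the same sandwich skeleton as the paper's own proof: both pin $\D{\elem{p^s}{\bf x}}{p^s}{\Z_q}$ between $\Low{q}{p^s}$ and $\Up{q}{p^s}$ via Theorem~\ref{thm:cyclicbound}, and both get the value $p^{r+s}$ of the lower bound from Proposition~\ref{prop:Low}. Where you diverge is the upper bound: the paper never evaluates $\Up{q}{p^s}$ exactly; it simply cites Part~1 of Proposition~\ref{prop:Up}, whose general estimate $\Up{q}{m} \leq mq$ already gives $\Up{q}{p^s} \leq p^s \cdot p^r = p^{r+s}$, which is all the sandwich needs. You instead re-derive the exact equality $\Up{p^r}{p^s} = p^{r+s}$ by a Kummer/Lucas digit analysis. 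That computation is sound: at $t = p^{r+s}$ the subtraction $(p^{r+s}-1)-(p^s-1)$ is carry-free, and for $p^{r+s}-p^s+1 \leq t \leq p^{r+s}-1$ the lowest $s$ base-$p$ digits of $t-1$ cannot all equal $p-1$ (no such $t$ is divisible by $p^s$), so a borrow occurs and $\nu_p\bigl(\binom{t-1}{p^s-1}\bigr) \geq 1$. Your route buys a bit more information (the exact value of $\Up{q}{m}$ in this case) at the cost of redoing work the paper already packaged into Proposition~\ref{prop:Up}. One remark in your write-up is wrong, though it does not sink the proof because your direct computation stands on its own: the interval check is \emph{not} ``precisely the content'' of Proposition~\ref{prop:Low}, and cannot be obtained by invoking that proposition directly. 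Proposition~\ref{prop:Low} bounds $\nu_p\bigl(\binom{t}{p^s}\bigr)$ \emph{above} by $r-1$ on its interval (non-divisibility by $p^r$), whereas here you need a \emph{lower} bound $\nu_p\bigl(\binom{t-1}{p^s-1}\bigr) \geq 1$ (divisibility by $p$); the binomial coefficients, the moduli, and the directions of the inequalities all differ. So either keep your carry argument and drop the claimed sidestep, or replace the whole second step by a citation of Part~1 of Proposition~\ref{prop:Up}, which collapses it to one line, as in the paper.
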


\begin{proof} We apply Theorem \ref{thm:cyclicbound}.  From Part 1 of Proposition \ref{prop:Up}, we have $\D{\elem{p^s}{\bf x}}{p^s}{\Z_q} \leq \Up{q}{p^s} \leq qp^s=p^{r+s}$.  From Proposition \ref{prop:Low}, we have $\D{\elem{p^s}{\bf x}}{p^s}{\Z_q} \geq \Low{q}{p^s}=p^{r+s}$.  Thus, equality holds.
\end{proof}

\section{More general lower and upper bounds}\label{section:pgroups}

We now provide a generalization of Theorem \ref{thm:cyclicbound} to products of the form $\pgroup{r}$.  We proceed in a similar way as in the set-up and proof of Theorem \ref{thm:cyclicbound}.

Define $\Up{\pvector{r}}{m}$ to be the smallest integer $t \geq m\sum_{i=1}^r(p^{\alpha_i}-1)+1$ such that 

$$\sum_{0 \leq 2j \leq m-1} {t \choose 2j} \not \equiv \sum_{1 \leq 2j+1 \leq m-1} {t \choose 2j+1}  \pmod{p}. $$
As before, an alternate definition of $\Up{\pvector{r}}{m}$ is the smallest integer  $t \geq m\sum_{i=1}^r(p^{\alpha_i}-1)+1$ such that ${t-1 \choose m-1} \not \equiv 0 \pmod{p}$.

\begin{thm}\label{thm:p-grouprank-rbound}
$\D{\elem{m}{\bf x}}{m}{\pgroup{r}} \leq {\Up{\pvector{r}}{m}}.$
\end{thm}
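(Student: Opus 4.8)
The plan is to mirror the proof of Theorem~\ref{thm:cyclicbound} almost verbatim, replacing the single cyclic factor $\Z_q$ by the $p$-group $G=\pgroup{r}$ and adjusting only the degrees-of-freedom count that feeds into Baker--Schmidt. First I would take an arbitrary sequence $S=(a_1,\dots,a_\ell)$ over $G$ with $\ell \geq \Up{\pvector{r}}{m}$ and, instead of directly searching for a subsequence $S'$ with $|S'|\ge m$ and $e_m(S')=0$, encode the search as the single polynomial equation
\begin{equation*}
\sum_{1 \le i_1 < \dots < i_m \le \ell} \Big(\prod^m_{j=1} a_{i_j}\Big)\, x_{i_1}\cdots x_{i_m} = 0,
\end{equation*}
viewed as a polynomial $\mathcal{F}$ in the idempotent variables $\xvector{\ell}$ with coefficients $\prod_j a_{i_j}\in G$. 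Since $G$ is a $p$-group, this is a single polynomial ($\ell=1$ in the notation of Theorem~\ref{thm:BS2}) of degree $m$ over a $p$-group, so Baker--Schmidt applies directly provided the hypothesis on the number of variables is met.

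The crucial arithmetic input is the degree bound. Theorem~\ref{thm:BS2} demands $s > d\,(\Dav{G}-1)$, and here $d=m$ while, because $G$ is a $p$-group, Theorem~\ref{thm:Olson} gives $\Dav{G}=1+\mathsf{d}^*(G)=1+\sum_{i=1}^r(p^{\alpha_i}-1)$. Thus the Baker--Schmidt hypothesis reads $\ell > m\sum_{i=1}^r(p^{\alpha_i}-1)$, which is exactly the lower threshold $t \geq m\sum_{i=1}^r(p^{\alpha_i}-1)+1$ built into the definition of $\Up{\pvector{r}}{m}$. So any $\ell \geq \Up{\pvector{r}}{m}$ clears the variable-count requirement, and Theorem~\ref{thm:BS2} yields $A = B \pmod p$, where $A$ and $B$ count even and odd idempotent solutions respectively.

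Next I would count the ``free'' solutions, exactly as before. Every $m$-artificial idempotent $\ell$-tuple $\evector{\ell}$ (one with fewer than $m$ ones) is automatically a solution, since each monomial $x_{i_1}\cdots x_{i_m}$ is a product of $m$ distinct variables and so at least one factor vanishes. This guarantees $A \geq |\Even{\ell}{m}| = \sum_{0\le 2j\le m-1}\binom{\ell}{2j}$ and $B \geq |\Odd{\ell}{m}| = \sum_{1\le 2j+1\le m-1}\binom{\ell}{2j+1}$. By the very definition of $\Up{\pvector{r}}{m}$ (equivalently, by the $\binom{t-1}{m-1}\not\equiv 0$ reformulation via the Pascal-identity computation already recorded in the excerpt), at $\ell=\Up{\pvector{r}}{m}$ these two artificial counts are \emph{incongruent} mod $p$. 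Hence $A$ and $B$ cannot both be accounted for by artificial solutions alone while still satisfying $A\equiv B$; there must exist an idempotent solution that is not $m$-artificial, i.e.\ one with at least $m$ ones. The support of that solution picks out a subsequence $S'$ with $|S'|\ge m$ and $e_m(S')=0$, proving the upper bound.

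I expect no genuine obstacle here: the argument is a direct lift of Theorem~\ref{thm:cyclicbound}, and the only substantive point is recognizing that the Baker--Schmidt threshold $m(\Dav{G}-1)$ collapses to $m\sum_i(p^{\alpha_i}-1)$ precisely because Olson--Kruyswijk evaluate $\Dav{G}$ for $p$-groups. The one place warranting care is the incongruence bookkeeping: I would state clearly that $A$ and $B$ are \emph{at least} the artificial counts, note that nonartificial solutions contribute to $A$ and $B$ as well, and conclude that if there were no nonartificial solution then $A\equiv B$ would force $|\Even{\ell}{m}|\equiv|\Odd{\ell}{m}|\pmod p$, contradicting the definition of $\Up{\pvector{r}}{m}$.
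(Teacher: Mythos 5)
Your proof is correct, but it routes through Baker--Schmidt differently from the paper. You feed in a \emph{single} degree-$m$ polynomial whose coefficients $\prod_j a_{i_j}$ (ring products) live in the additive $p$-group $G$ of the whole ring $\pgroup{r}$, so the variable-count hypothesis reads $s > m(\Dav{G}-1)$, and you must then invoke Olson--Kruyswijk (Theorem \ref{thm:Olson}) to identify $\Dav{G}-1$ with $\sum_{i=1}^r(p^{\alpha_i}-1)$ and match the threshold built into the definition of $\Up{\pvector{r}}{m}$. The paper instead writes $a_i=[a_{i,1},\dots,a_{i,r}]$ and decomposes $e_m(S')=0$ coordinate-wise into a \emph{system} of $r$ degree-$m$ equations, the $k$-th having coefficients in the cyclic group $\Z_{p^{\alpha_k}}$; Baker--Schmidt applied to this system yields the threshold $\sum_{k=1}^r m(p^{\alpha_k}-1)$ directly, using only the elementary evaluation $\Dav{\Z_{p^{\alpha_k}}}=p^{\alpha_k}$ rather than the full Olson--Kruyswijk theorem. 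The two thresholds coincide precisely because $\Dav{G}=1+\mathsf{d}^*(G)$ for $p$-groups, so your argument is sound; what it buys is brevity and a transparent explanation of why the cutoff is $m(\Dav{G}-1)+1$, while the paper's version buys independence from the nontrivial $p$-group Davenport evaluation inside the proof itself (the paper only uses Theorem \ref{thm:Olson} afterwards, in Corollary \ref{mD}, to rewrite the bound). Everything downstream --- artificial idempotent tuples as guaranteed even/odd solutions, the incongruence of the two binomial sums modulo $p$, and the parity contradiction forcing a non-artificial solution --- is identical in the two proofs, and your closing bookkeeping is, if anything, stated more carefully than the paper's. One small point to make explicit: it suffices to treat sequences of length exactly $\Up{\pvector{r}}{m}$ (any longer sequence can be truncated to that length), since the defining incongruence is only guaranteed at that exact value of $t$; the paper's proof relies on the same implicit reduction.
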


\begin{proof} 
Let $S=(a_1, \ldots ,a_{\ell})$ be a sequence over $\pgroup{r}$. For each $1 \le i \le \ell$, we write $a_i=[a_{i,1},\dots,a_{i,r}]$ where $a_{i,k} \in \mathbb{Z}_{p^{\alpha_k}}$ for all $1 \le k \le r$.
As opposed to seeking subsequences $S'$ of length at least $m$ such that $e_m(S')=0$, we may seek idempotent solutions that are not $m$-artificial to the following {\em system} of $r$ polynomial equations,
$$ \displaystyle\sum_{1 \le i_1 < \dots < i_m \le \ell} \displaystyle\prod^m_{j=1} a_{i_j,k}x_{i_j} \equiv 0 \pmod{p^{\alpha_k}} ~~ \forall k ~{\text{where}}~ 1 \leq k \leq r.$$
To prove the upper bound, consider this system of polynomial equations, each of which is of degree $m$, when the number of variables is $\Up{\pvector{r}}{m} \geq m\sum_{i=1}^r(p^{\alpha_i}-1)+1$.

Clearly, all $m$-artificial idempotent ${\Up{\pvector{r}}{m}}$-tuples are solutions to this system of equations since each monomial of each polynomial is a product of $m$ variables (and so at least one variable in each monomial evaluates as $0$ and so each monomial evaluates as  $0$).  From these solutions, we know that the number of even idempotent solutions $A$ is at least $\Even{\Up{\pvector{r}}{m}}{m}$ and the number of odd idempotent solutions $B$ is at least $\Odd{\Up{\pvector{r}}{m}}{m}$.  By the definition of $\Up{\pvector{r}}{m}$, we have that $$|\Even{\Up{\pvector{r}}{m}}{m}| \not \equiv |\Odd{\Up{\pvector{r}}{m}}{m}| \pmod{p}.$$  Thus, by Theorem \ref{thm:BS2}, there exists an idempotent solution that is not $m$-artificial.
\end{proof}

\begin{cor} \label{mD}
\begin{eqnarray*}
\D{\elem{m}{\bf x}}{m}{\pgroup{r}} & \leq & m (\displaystyle\sum_{i=1}^r(p^{\alpha_i}-1)+1)\\
							& =& m\Dav{\pgroup{r}}.
\end{eqnarray*}
\end{cor}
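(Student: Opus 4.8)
The plan is to reduce the claim to an upper bound on the quantity $\Up{\pvector{r}}{m}$ appearing in Theorem \ref{thm:p-grouprank-rbound}, and then dispatch the stated equality using Olson's Theorem. For the equality, since $\pgroup{r}$ is a $p$-group, Theorem \ref{thm:Olson} gives $\Dav{\pgroup{r}} = 1 + \sum_{i=1}^r(p^{\alpha_i}-1)$, so $m\Dav{\pgroup{r}} = m\left(\sum_{i=1}^r(p^{\alpha_i}-1)+1\right)$ is immediate. Combined with Theorem \ref{thm:p-grouprank-rbound}, which yields $\D{\elem{m}{\bf x}}{m}{\pgroup{r}} \le \Up{\pvector{r}}{m}$, it therefore suffices to prove the displayed bound on $\Up{\pvector{r}}{m}$.

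For that bound, I would observe that the upper-bound half of Part 1 of Proposition \ref{prop:Up} is really a statement about an arbitrary threshold rather than about $q-1$ specifically: for any positive integer $M$, the smallest integer $t \ge mM+1$ with ${t-1 \choose m-1} \not\equiv 0 \pmod p$ is at most $m(M+1)$. Writing $N := \sum_{i=1}^r(p^{\alpha_i}-1)$, the alternate definition of $\Up{\pvector{r}}{m}$ recorded just before Theorem \ref{thm:p-grouprank-rbound} is precisely this quantity for the threshold $M=N$, so applying the general statement with $M=N$ gives $\Up{\pvector{r}}{m} \le m(N+1)$, as desired.

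To prove the general threshold statement, I would repeat the Pascal's Rule telescoping from Proposition \ref{prop:Up}. Suppose for contradiction that ${t-1 \choose m-1} \equiv 0 \pmod p$ for every $t$ with $mM+1 \le t \le m(M+1)$; equivalently ${n \choose m-1} \equiv 0 \pmod p$ for all $n$ in the $m$ consecutive values $mM \le n \le mM+m-1$. Using ${n \choose k} = {n+1 \choose k+1} - {n \choose k+1}$ to lower the bottom index, one checks by induction on $j$ that ${n \choose m-1-j} \equiv 0 \pmod p$ for $mM \le n \le mM+m-1-j$, since the induction step requires only that both $n$ and $n+1$ lie in the preceding range. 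After $m-1$ steps this forces ${mM \choose 0} \equiv 0 \pmod p$, contradicting ${mM \choose 0}=1$.

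I do not expect a genuine obstacle here: the entire content is the telescoping identity, which is already established for $r=1$ in Proposition \ref{prop:Up}. The only point requiring care is the bookkeeping, namely verifying that at each application of Pascal's Rule the range of indices for which the binomial coefficient vanishes shrinks by exactly one on the right, so that precisely $m-1$ iterations land on ${mM \choose 0}$.
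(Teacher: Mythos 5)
Your proposal is correct and takes essentially the same route as the paper: the paper's proof likewise combines Theorem \ref{thm:p-grouprank-rbound} and Olson's Theorem with the observation that the Pascal's Rule telescoping from Proposition \ref{prop:Up}.1 applies verbatim with the threshold $\sum_{i=1}^r(p^{\alpha_i}-1)$ in place of $q-1$, yielding $\Up{\pvector{r}}{m} \leq m\left(\sum_{i=1}^r(p^{\alpha_i}-1)+1\right)$. Your explicit arbitrary-threshold lemma and its induction bookkeeping are simply a spelled-out version of what the paper compresses into ``the same argument as in the proof of Proposition \ref{prop:Up}.1.''
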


\begin{proof} 
The same argument as in the proof of Proposition \ref{prop:Up}.1 implies that 
\begin{eqnarray*}
{\Up{\pvector{r}}{m}} & \leq & (m\sum_{i=1}^r(p^{\alpha_i}-1)+1) + m-1= m  (\sum_{i=1}^r(p^{\alpha_i}-1)+1).
\end{eqnarray*}
The result then follows from Theorems \ref{thm:Olson} and \ref{thm:p-grouprank-rbound}.
\end{proof}

\begin{lem}\label{lem:eitherornm}
For every integer $z$ such that ${z \choose m} \equiv 0 \pmod {n}$, we have $z < m$ or $z \geq \Low{n}{m}.$
\end{lem}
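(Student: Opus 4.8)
The plan is to read the statement straight off the definition of $\Low{n}{m}$, handling the boundary value $z=m$ as a separate case. Recall that $\Low{n}{m}$ was defined to be the smallest integer $t \geq m+1$ for which ${t \choose m} = 0 \pmod{n}$. Phrased contrapositively, the lemma asserts that no integer $z$ lying in the range $m \leq z < \Low{n}{m}$ can satisfy ${z \choose m} = 0 \pmod{n}$, so the whole argument amounts to verifying this for the two relevant subranges.

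First I would dispose of the value $z = m$ (values $z < m$ need no argument, as they are already the first alternative in the conclusion). When $z = m$ we have ${z \choose m} = {m \choose m} = 1$, and since $n \geq 2$ this is nonzero modulo $n$. Hence the hypothesis ${z \choose m} = 0 \pmod{n}$ fails for $z = m$, so this value is vacuously excluded and cannot cause trouble.

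It then remains only to treat $z \geq m+1$. Here I would invoke the minimality that is built into the definition of $\Low{n}{m}$: among all integers $t \geq m+1$, it is by definition the smallest one whose binomial coefficient ${t \choose m}$ vanishes modulo $n$. Since $z \geq m+1$ and, by hypothesis, ${z \choose m} = 0 \pmod{n}$, the integer $z$ lies in the set of which $\Low{n}{m}$ is the minimum; therefore $z \geq \Low{n}{m}$, which is precisely the second alternative in the conclusion.

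The argument is essentially definitional, so I do not expect any substantial obstacle. The only point requiring a little care is the boundary value $z = m$, whose exclusion relies on $n \geq 2$ (equivalently, on the underlying group $\Z_n$ being non-trivial), an assumption that is in force throughout the paper.
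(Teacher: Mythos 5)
Your proof is correct and follows essentially the same route as the paper, which simply observes that the lemma is immediate from the definition of $\Low{n}{m}$; you merely unpack that observation, with the boundary case $z=m$ (where ${m\choose m}=1\not\equiv 0 \pmod{n}$ since $n\geq 2$) handled explicitly and the case $z\geq m+1$ settled by the minimality built into the definition. No gaps.
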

\begin{proof} This follows immediately from the definition of $\Low{n}{m}$.\end{proof}

\begin{thm}\label{thm:directproductlowerbound}
Let $A$ be the ring $\abeliangroup{b}$.  We have $$\D{\elem{m}{\bf x}}{m}{A} \geq \sum_{j=1}^b \Low{n_j}{m}-(b-1)m.$$
\end{thm}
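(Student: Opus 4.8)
The plan is to establish this lower bound by exhibiting an explicit \emph{$m$-zero free} sequence over $A=\abeliangroup{b}$ whose length is exactly one less than the claimed bound, namely $\sum_{j=1}^b \Low{n_j}{m}-(b-1)m-1$; this immediately gives $\D{\elem{m}{\bf x}}{m}{A}\geq \sum_{j=1}^b \Low{n_j}{m}-(b-1)m$. Writing each element of $A$ as a tuple $[a^{(1)},\dots,a^{(b)}]$ and recalling that multiplication in $A$ is componentwise, the value $e_m(S')$ splits coordinatewise: $e_m(S')=0$ if and only if, for every $j$, the $e_m$ of the $j$-th coordinates vanishes modulo $n_j$. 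A naive attempt concatenating the cyclic extremal sequences built from the one-coordinate-supported idempotents $\mathbf{e}_j$ fails, since one may then choose fewer than $m$ terms from each block yet more than $m$ terms in total, making every coordinate vanish simultaneously; the main idea is therefore to force the coordinates to \emph{accumulate} rather than behave independently.

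Concretely, for $1\le j \le b$ let $w_j\in A$ be the element whose coordinates $j,j+1,\dots,b$ equal $1$ and whose remaining coordinates equal $0$. I would form $S$ by taking $\Low{n_1}{m}-1$ copies of $w_1$ together with $\Low{n_j}{m}-m$ copies of $w_j$ for each $2\le j\le b$; since each $\Low{n_j}{m}\ge m+1$ these multiplicities are nonnegative, and the total length is $\sum_{j=1}^b \Low{n_j}{m}-(b-1)m-1$, as required. The key computation is that for any subsequence $S'$, letting $k_j$ denote the number of copies of $w_j$ selected, the $i$-th coordinate of the elements of $S'$ consists of exactly $P_i:=\sum_{j=1}^i k_j$ ones together with some zeros, so that, working in $\mathbb{Z}_{n_i}$, the $e_m$ of that coordinate equals $\binom{P_i}{m}$. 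Hence $S'$ is an $m$-zero subsequence precisely when $\binom{P_i}{m}\equiv 0 \pmod{n_i}$ for every $i$.

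The final step is a short induction on the coordinate index $i$ that rules out any $m$-zero subsequence of length $\ge m$. For $i=1$ we have $P_1=k_1\le \Low{n_1}{m}-1$, so by Lemma \ref{lem:eitherornm} the vanishing of coordinate $1$ forces $P_1<m$; and assuming $P_{i-1}\le m-1$, the bound $k_i\le \Low{n_i}{m}-m$ yields $P_i\le \Low{n_i}{m}-1$, whence vanishing of coordinate $i$ again forces $P_i<m$ by Lemma \ref{lem:eitherornm}. Iterating gives $|S'|=P_b\le m-1$, contradicting $|S'|\ge m$, so $S$ is indeed $m$-zero free. I expect the only delicate point to be calibrating this staircase of multiplicities so that each partial sum $P_i$ is guaranteed to lie in the range $[\,0,\Low{n_i}{m}-1\,]$ exactly when the earlier coordinates have already been forced below $m$; this interlocking is precisely what makes the sequential application of Lemma \ref{lem:eitherornm} legitimate and is the feature the naive independent construction lacks.
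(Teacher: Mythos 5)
Your proposal is correct and matches the paper's own proof essentially verbatim: the same staircase sequence (your $w_j$ are the paper's $g_j$, with identical multiplicities), the same coordinatewise reduction of $e_m(S')$ to the binomial coefficients $\binom{P_i}{m} \pmod{n_i}$, and the same repeated appeal to Lemma \ref{lem:eitherornm}. The only cosmetic difference is the direction of the induction — you go forward through the coordinates showing each partial sum satisfies $P_i < m$ and contradict $|S'|\ge m$ at the end, whereas the paper starts from $|S'|\ge m$ and works backward to a contradiction with the multiplicity of $g_1$; these are contrapositive formulations of the same step.
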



\begin{proof} We show that the following sequence $S$ is $m$-zero free over $\abeliangroup{b}$.  

Let $S$ be a sequence over $\abeliangroup{b}$ with elements $g_1=[1,1, \ldots,1]$ repeated $\Low{n_1}{m}-1$ times, $g_2=[0,1, \ldots ,1]$ repeated $\Low{n_2}{m}-m$ times, $\ldots , g_b=[0, \ldots, 0,1]$ repeated $\Low{n_b}{m}-m$ times.  (Notice that the number of times that $g_1$ is repeated is different in format from that of the other $g_i$s.)  The sequence $S$ has length $|S|=(\Low{n_1}{m}-m)+\ldots+(\Low{n_b}{m}-m)+m-1.$

For the sake of contradiction, let $S'$ be an $m$-zero subsequence of length at least $m$.  For $i=1, \ldots, b$, let $s_i \geq 0$ be the number of times $g_i$ appears in $S'$.  Since $S'$ is a subsequence, we have $s_1 \leq \Low{n_1}{m}-1, s_2 \leq \Low{n_2}{m}-m,\ldots , s_b \leq \Low{n_b}{m}-m$.  Since $S'$ is an $m$-zero sequence of length at least $m$, we have $|S'|=s_1+\ldots +s_b \geq m$.  We also have

\begin{eqnarray*}
{s_1 \choose m} &\equiv& 0 \pmod {n_1} \\
{s_1+s_2 \choose m}&\equiv& 0 \pmod {n_2}\\
\ldots \\
\ldots \\
{s_1+s_2+\ldots +s_b \choose m} &\equiv& 0 \pmod {n_b}.
\end{eqnarray*}

We now prove by induction on $i \in [0,b-1]$ that $S'$ has the following property: $P(i) :s_1+\ldots +s_{b-i} \geq m$.

We first establish the base case.  When $i=0$, we have that since $S'$ is an $m$-zero sequence of length at least $m$, $|S'|=s_1+\ldots +s_b \geq m.$ Now we establish the induction step and so we assume that $P(i)$ holds for some $i \in [0,b-2]$.  That is, assume that $s_1+\ldots+s_{b-i} \geq m$ for some $i \in [0,b-2]$.  Since ${s_1+\ldots +s_{b-i} \choose m} \equiv 0 \pmod {n_{b-i}},$ Lemma \ref{lem:eitherornm} implies that $s_1+ \ldots + s_{b-i} \geq \Low{n_{b-i}}{m}$.  If $s_1+\ldots +s_{b-(i+1)} < m,$ then 

\begin{eqnarray*}
\Low{n_{b-i}}{m} & \leq & s_1+ \ldots + s_{b-i}\\
			  &= &  (s_1+\ldots +s_{b-(i+1)})+s_{b-i}\\
			 & \leq & m-1+(\Low{n_{b-i}}{m}-m)\\
			 & =& \Low{n_{b-i}}{m}-1,
\end{eqnarray*} 
a contradiction.  Therefore, $s_1+\ldots +s_{b-(i+1)} \geq m$.

In particular, we have established that $s_1 \geq m$.  Since ${s_1 \choose m} \equiv 0 \pmod{n_1},$ Lemma \ref{lem:eitherornm} yields $s_1 \geq \Low{n_1}{m}$, contradicting the fact that the number $s_1$ of copies of $g_1$ contained in  $S'$ is at most $\Low{n_1}{m}-1$. \end{proof}

\begin{rem}
When $m=1$, Theorem \ref{thm:directproductlowerbound} recovers the well-known lower bound for the Davenport constant.  That is, $\Dav{\abegroup} \geq \sum_{i=1}^r(n_i-1)+1$, which Theorem \ref{thm:Olson} shows to be sharp for $p$-groups and groups of rank at most 2.  Theorem \ref{thm:directproductlowerbound} shows that it is sharp whenever $A$ is a product of the form $\pgroup{r}$ and $m$ is a power of $p$.
\end{rem}

\begin{rem}
We wish to emphasize that Theorem \ref{thm:directproductlowerbound} is for {\em any} direct product of cyclic groups.  That is, say, for example, we consider $\Z_6$, which is isomorphic to $\Z_2 \oplus \Z_3$.  The theorem applies to both representations and we may {\em choose} the one for which the theorem provides the best bound.
\end{rem}

\begin{thm}\label{thm:p-grouprank-rlowerbound}
For a prime $p$ and $s \geq 0,$ we have $$\D{\elem{p^s}{\bf x}}{p^s}{\pgroup{r}} = p^s\left(\sum_{i=1}^r(p^{\alpha_i}-1)+1\right)=p^s\Dav{\pgroup{r}}.$$
\end{thm}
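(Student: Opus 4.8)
The plan is to sandwich $\D{\elem{p^s}{\bf x}}{p^s}{\pgroup{r}}$ between matching upper and lower bounds, both of which are already available from the results established above. For the upper bound I would invoke Corollary \ref{mD} directly with $m = p^s$, which gives
$$\D{\elem{p^s}{\bf x}}{p^s}{\pgroup{r}} \leq p^s\left(\sum_{i=1}^r(p^{\alpha_i}-1)+1\right) = p^s\Dav{\pgroup{r}}.$$
No further work is needed here; the content is entirely in Theorem \ref{thm:p-grouprank-rbound}, Proposition \ref{prop:Up}.1, and Olson's Theorem \ref{thm:Olson}.

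For the lower bound I would apply Theorem \ref{thm:directproductlowerbound} to the ring $A = \pgroup{r}$, viewed as the direct product of the $r$ cyclic groups $\mathbb{Z}_{p^{\alpha_1}}, \ldots, \mathbb{Z}_{p^{\alpha_r}}$ (so that $b = r$ and $n_j = p^{\alpha_j}$). This yields
$$\D{\elem{p^s}{\bf x}}{p^s}{A} \geq \sum_{j=1}^r \Low{p^{\alpha_j}}{p^s} - (r-1)p^s.$$
Because the degree $m = p^s$ is a power of $p$ and each modulus is $p^{\alpha_j}$, Proposition \ref{prop:Low} supplies the exact closed form $\Low{p^{\alpha_j}}{p^s} = p^{\alpha_j + s}$, and substituting this in is the crucial step that turns the implicit bound into an explicit one.

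It then remains only to simplify the resulting expression and check that it coincides with the upper bound. Factoring out $p^s$ gives
$$\sum_{j=1}^r p^{\alpha_j + s} - (r-1)p^s = p^s\left(\sum_{j=1}^r p^{\alpha_j} - (r-1)\right) = p^s\left(\sum_{j=1}^r (p^{\alpha_j}-1) + 1\right),$$
which is precisely $p^s\Dav{\pgroup{r}}$ by Theorem \ref{thm:Olson}. Since the upper and lower bounds agree, equality follows. I do not anticipate any genuine obstacle: the theorem is really a corollary of the two flanking bounds, and the only subtlety is that both the exactness of $\Low{p^{\alpha_j}}{p^s}$ in Proposition \ref{prop:Low} and the telescoping of the $-(r-1)p^s$ term depend essentially on $m$ being a power of $p$. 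Were $m$ not a $p$-power, Proposition \ref{prop:Low} would not apply and the two bounds would generically separate, so the hypothesis $m = p^s$ is exactly what forces the collapse to a single value.
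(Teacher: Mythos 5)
Your proposal is correct and takes essentially the same route as the paper's own proof: the upper bound via Corollary \ref{mD}, the lower bound via Theorem \ref{thm:directproductlowerbound}, and the final identification with $p^s\Dav{\pgroup{r}}$ via Theorem \ref{thm:Olson}. The only difference is that you make explicit the substitution $\Low{p^{\alpha_j}}{p^s}=p^{\alpha_j+s}$ from Proposition \ref{prop:Low} and the resulting algebraic simplification, which the paper's terse proof leaves implicit.
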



\begin{proof} 
By Corollary \ref{mD}, we have $$\D{\elem{p^s}{\bf x}}{p^s}{\pgroup{r}} \leq p^s \left(\sum_{i=1}^r(p^{\alpha_i}-1)+1\right).$$ 
On the other hand, Theorem \ref{thm:directproductlowerbound} gives $$\D{\elem{p^s}{\bf x}}{p^s}{\pgroup{r}} \geq p^s\left(\sum_{i=1}^r(p^{\alpha_i}-1)+1\right).$$
The last equality follows directly from Theorem \ref{thm:Olson}.
\end{proof}

\section{Improved bounds for $\D{\elem{m}{\bf x}}{m}{\Z_n}$ using the Girard-Newton formulae}\label{section:Girard-Newton}

We now state a historical set of relations between the elementary symmetric polynomials and the power sum polynomials.  These 17th-century relations are independently due to Albert Girard and Isaac Newton and known as the Girard-Newton formulae (or sometimes Newton's identities).

The symmetric functions that will be of interest to us consist of the following.  For $k \geq 0$, the {\it elementary symmetric polynomial of degree $k$} is the sum of all distinct products of $k$ distinct variables.  Thus, $\Elem{0}{n}=1, \Elem{1}{n}=x_1+\cdots +x_n, \Elem{2}{n}=\sum_{1 \leq i < j \leq n}x_ix_j$ and, so on, until,  $\Elem{n}{n} =x_1x_2\ldots x_n$.  The {\it $k$-th power sum polynomial} is $\Pow{k}{n} = \sum_{i=1}^{n} x_i^k$.

\begin{thm}[Girard-Newton formulae]
For all $n \geq 1$ and $1 \leq k \leq n$, we have
\begin{equation}\label{eqn:girard}
k\Elem{k}{n} = \sum_{i=1}^{k}(-1)^{i-1}\Elem{k-i}{n}\Pow{i}{n}.
\end{equation}

\end{thm}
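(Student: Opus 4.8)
The plan is to prove the Girard-Newton formulae by the generating-function (logarithmic-derivative) method, which converts the combinatorial identity into a routine comparison of power-series coefficients. Every equation below is to be read as an identity in the ring $\Z[x_1,\ldots,x_n][[t]]$ of formal power series in an auxiliary indeterminate $t$, so that no question of convergence arises and the division performed in the key step is legitimate.

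First I would introduce the generating function
$$E(t) = \prod_{a=1}^{n}(1 - x_a t) = \sum_{j=0}^{n}(-1)^j\,\Elem{j}{n}\,t^j,$$
where the second equality is simply the expansion of the product together with the definition of the elementary symmetric polynomials. Alongside it I would consider $P(t) = \sum_{i \geq 1}\Pow{i}{n}\,t^{i-1}$ and record the closed form $P(t) = \sum_{a=1}^{n}\frac{x_a}{1 - x_a t}$, obtained by substituting the geometric expansion $\frac{x_a}{1-x_a t} = \sum_{i \geq 1} x_a^{\,i}\,t^{i-1}$ and summing over $a$.

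The key step is the relation $E'(t) = -E(t)\,P(t)$. To establish it I would differentiate the product to get $E'(t) = -\sum_{a=1}^{n} x_a\prod_{b \neq a}(1 - x_b t)$, whence $E'(t)/E(t) = -\sum_{a=1}^{n}\frac{x_a}{1 - x_a t} = -P(t)$; clearing the denominator gives the relation, and this manipulation is valid because $E(0) = 1$ is a unit in $\Z[x_1,\ldots,x_n]$, so $E(t)$ is invertible as a formal power series.

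Finally I would compare the coefficient of $t^{k-1}$ on the two sides of $E'(t) = -E(t)P(t)$. On the left, $E'(t) = \sum_{j=1}^{n}(-1)^j j\,\Elem{j}{n}\,t^{j-1}$ contributes $(-1)^k k\,\Elem{k}{n}$. On the right, pairing the $t^{k-i}$ term of $E$ with the $t^{i-1}$ term of $P$ yields $-\sum_{i=1}^{k}(-1)^{k-i}\Elem{k-i}{n}\Pow{i}{n}$. Equating the two and dividing by $(-1)^k$ produces exactly Equation (\ref{eqn:girard}). The only place demanding care is this last bookkeeping: checking that the constraint $j = k-i$ (with $j \geq 0$, $i \geq 1$) forces the range $1 \leq i \leq k$, and that the combination of the leading minus sign with $(-1)^{k-i}/(-1)^k$ collapses to $(-1)^{i-1}$. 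I expect this routine sign-and-index check to be the sole obstacle. Should one prefer to avoid generating functions entirely, an alternative is a sign-reversing involution on the monomials of $\sum_{i}(-1)^{i-1}\Elem{k-i}{n}\Pow{i}{n}$ that cancels all terms except those assembling $k$ copies of each squarefree monomial of degree $k$, but I would expect the argument above to be shorter and less error-prone.
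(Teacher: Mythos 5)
Your proof is correct, but there is nothing in the paper to compare it against: the paper records the Girard--Newton formulae as a classical 17th-century result (attributed to Girard and Newton) and uses it without proof, so any valid argument you supply is by definition ``a different route.'' Your logarithmic-derivative argument is one of the standard proofs and is complete as written. The two points that need care both check out: (i) working in $\Z[x_1,\ldots,x_n][[t]]$, the division by $E(t)$ is legitimate precisely because $E(0)=1$ is a unit, and in any case you only ever use the cleared form $E'(t)=-E(t)P(t)$, which avoids even that; (ii) the coefficient extraction at $t^{k-1}$ is right --- on the right-hand side the constraint $j=k-i$ with $j\ge 0$, $i\ge 1$ forces $1\le i\le k$, and equating
\begin{equation*}
(-1)^k k\,\Elem{k}{n} \;=\; -\sum_{i=1}^{k}(-1)^{k-i}\,\Elem{k-i}{n}\,\Pow{i}{n}
\end{equation*}
and dividing by $(-1)^k$ gives $-(-1)^{-i}=(-1)^{i-1}$ as the surviving sign, which is exactly Equation~(\ref{eqn:girard}). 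Note also that the hypothesis $k\le n$ is what guarantees the left-hand coefficient really is $(-1)^k k\,\Elem{k}{n}$ with $\Elem{k}{n}$ a genuine (nonzero) elementary symmetric polynomial; for $k>n$ the same computation yields the truncated variant of the identities. Your suggested alternative (a sign-reversing involution) would also work, but the generating-function route you chose is shorter and is the argument most texts give.
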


We may rewrite Equations \ref{eqn:girard} in a manner that is independent of the number of variables, that is, we may rewrite Equations \ref{eqn:girard} in the ring of symmetric functions as

\begin{equation}\label{eqn:girardsimple}
ke_k =\sum_{i=1}^k (-1)^{i-1} e_{k-i}p_i.
\end{equation}

One may use the Girard-Newton formulae to recursively express elementary symmetric polynomials in terms of power sums as follows.

\begin{equation}\label{eqn:elementaryinpower}
e_k =(-1)^k \sum \prod_{i=1}^{k} \frac{(-p_i)^{j_i}}{j_i!i^{j_i}},
\end{equation}
where the sum extends over all solutions to $j_1+2j_2+\cdots+kj_k=k$ such that $ j_1, \ldots, j_k \geq 0$.  For example, we have $e_1=p_1, ~e_2=\frac{1}{2}p_1^2-\frac{1}{2}p_2, ~e_3=\frac{1}{6}p_1^3-\frac{1}{2}p_1p_2+\frac{1}{3}p_3,~e_4=\frac{1}{24}p_1^4-\frac{1}{4}p_1^2p_2+\frac{1}{8}p_2^2+\frac{1}{3}p_1p_3-\frac{1}{4}p_4$. Upon multiplying both sides of Equation \ref{eqn:elementaryinpower} by $k!$, we obtain on the right side {\it integer} coefficients.

Notice that for Equation \ref{eqn:elementaryinpower}, each term in the sum of the right side is a product that contains at most $k$ distinct power sum polynomials.  For a fixed $k$ we call a set $T$ of power sum polynomials a {\it dominating set for $e_k$} if each term in the sum contains at least one member of $T$. Let $\dom{k}$ denote the size of the smallest dominating set.  For $k=1$, the only dominating set is $\{p_1\}$, and so $t(1)=1$.  For $k=2$, the only dominating set is $\{p_1, p_2\}$, and so $t(2)=2$.  For $k=3$, any dominating set must contain both $p_1$ and $p_3$ and $\{p_1,p_3\}$ is a dominating set, and so $t(3)=2$.

\begin{lem}\label{lemma:dominatingsetsize}
We have $\dom{k}=\frac{k+2}{2}$ when $k$ is even, $\dom{k}=\frac{k+1}{2}$ when $k$ is odd.
\end{lem}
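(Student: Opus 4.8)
The plan is to translate the domination condition into a purely numerical statement about writing $k$ as a sum, and then to settle that statement by a pairing argument.

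First I would identify each power sum $p_i$ with its index $i$ and read off from Equation \ref{eqn:elementaryinpower} that the monomials occurring in $e_k$ are in bijection with the partitions of $k$: the solution $(j_1,\dots,j_k)$ of $j_1+2j_2+\cdots+kj_k=k$ corresponds to the partition having $j_i$ parts equal to $i$, and the associated term involves precisely those $p_i$ for which $i$ occurs as a part. Consequently a set $T\subseteq\{p_1,\dots,p_k\}$ is a dominating set if and only if every partition of $k$ has at least one part whose size lies in $T$; equivalently, writing $S=\{1,\dots,k\}\setminus T$, there is \emph{no} partition of $k$ all of whose parts belong to $S$. In other words, $T$ is dominating exactly when $k$ cannot be expressed as a sum (with repetition allowed) of elements of $S$. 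Since $|T|+|S|=k$, minimizing $|T|$ is the same as maximizing $|S|$ subject to this non-representability condition, so it suffices to compute $\max|S|$ and then set $\dom{k}=k-\max|S|$.

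Next I would bound $|S|$ from above. Any admissible $S$ must omit $k$ (otherwise $k=k$), so $S\subseteq\{1,\dots,k-1\}$. I would then pair the elements of $\{1,\dots,k-1\}$ via $i\leftrightarrow k-i$. If $S$ contained both members of some pair it would give $k=i+(k-i)$, and for even $k$ containing the fixed point $\floor{k/2}=k/2$ would give $k=\tfrac{k}{2}+\tfrac{k}{2}$; both are forbidden. Hence $S$ meets each two-element pair in at most one point and avoids the fixed point entirely, which yields $|S|\le (k-2)/2$ for even $k$ and $|S|\le (k-1)/2$ for odd $k$. For the matching lower bound on $\max|S|$ I would exhibit $S=\{i : k/2<i<k\}$, consisting of the larger element of each pair: every single element is smaller than $k$ and every sum of two or more of them exceeds $k$, so $k$ is not representable, while a direct count shows $|S|$ equals the upper bound just obtained. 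Combining the two directions gives $\max|S|=(k-2)/2$ or $(k-1)/2$ according to the parity of $k$, and therefore $\dom{k}=(k+2)/2$ for even $k$ and $\dom{k}=(k+1)/2$ for odd $k$.

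The conceptual heart of the argument is the reformulation carried out in the first step; once domination is recast as the impossibility of representing $k$ by sums over $S$, the remaining work is the short pairing count. I expect the only genuinely delicate point to be the parity bookkeeping caused by the fixed point $k/2$ when $k$ is even, which is exactly what separates the two cases of the formula.
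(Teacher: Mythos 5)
Your proof is correct and follows essentially the same route as the paper: your pairs $\{i,k-i\}$ (together with the fixed point $k/2$ and the singleton $k$) are exactly the solutions $j_i=j_{k-i}=1$, $j_{k/2}=2$, $j_k=1$ that the paper uses for the lower bound, and your extremal set $S=\{i : k/2<i<k\}$ is precisely the complement of the paper's dominating set $\{p_1,\dots,p_{\lfloor k/2\rfloor},p_k\}$. The only real difference is presentational: recasting domination as non-representability of $k$ as a sum over the complement makes the verification that this set dominates (one part is less than $k$, while two or more parts, each exceeding $k/2$, sum to more than $k$) cleaner than the paper's direct case analysis on the solutions $(j_1,\dots,j_k)$.
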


\begin{proof}
We may determine the size of the smallest dominating set by examining the solutions to the equation $j_1+2j_2+\cdots+kj_k=k$ such that $ j_1, \ldots, j_k \geq 0$.  There are solutions of the form (where we only specify the non-zero terms): $j_i=j_{k-i}=1$ for $1 \leq i \leq k/2-1$; $j_{k/2}=2$; and, $j_k=1$.  Thus selecting one element from each of the following sets $\{p_1, p_{k-1}\}, \ldots, \{p_{k/2-1},p_{k/2+1}\}, \{p_{k/2}\}, \{p_k\}$ is necessary to form a dominating set.  Thus, $\dom{k} \geq k/2+1$. Also, whenever $k/2+1 \leq i \leq k$ there is no solution with $j_i \geq 2$  and for any solution there is at most one index $i$, where $k/2 +1 \leq i \leq k$, so that $j_i=1$.  This implies that for any solution where for $k/2 +1 \leq i \leq k-1$ we have $j_i=1$, we also have $j_{i'}\geq 1$ for $1 \leq i' \leq k/2-1$.  Thus, $\{p_1, \ldots ,p_{k/2}, p_k\}$ is a dominating set of size $k/2+1$.  When $k$ is odd a similar argument allows us to claim that $\{p_1, \ldots ,p_{(k-1)/2},p_k\}$ is a minimum-sized dominating set. 

\end{proof}


\begin{thm}\label{thm:m=2upperbound}
Let $n =2^{\nu_2(n)}m$ with  $m \geq 3$, and $b:=\lfloor \frac{\nu_2(n)-1}{2} \rfloor$.  We have
\begin{enumerate}
\item $\D{\elem{2}{\bf x}}{2}{\Z_n} \leq 2n-1$ when $\nu_2(n)=0$ (i.e. when $n$ is odd), and

\item $\D{\elem{2}{\bf x}}{2}{\Z_n} \leq (2+\frac{1}{2^b})n-1$ when $\nu_2(n) \geq 1$ (i.e. when $n$ is even).  
\end{enumerate}
\end{thm}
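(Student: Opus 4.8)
The plan is to exploit the Girard--Newton relation attached to the dominating set $\{p_1,p_2\}$ of $e_2$, namely $2e_2=p_1^2-p_2$, in order to replace the degree-$2$ condition by an ordinary (degree-$1$) zero-sum condition. For any integer sequence $S'$ the identity $2e_2(S')=p_1(S')^2-p_2(S')$ holds over $\Z$, and since $n\mid x$ is equivalent to $2n\mid 2x$, we obtain the exact equivalence
$$e_2(S')\equiv 0 \pmod n\iff p_1(S')^2\equiv p_2(S') \pmod{2n}.$$
Because $p_1$ and $p_2$ are additive over subsequences, it then suffices to find a subsequence $S'$ with $|S'|\ge 2$ on which $p_1$ and $p_2$ vanish to suitably chosen moduli that force the right-hand congruence; this is exactly a classical Davenport problem in a rank-$2$ group, to which Theorem \ref{thm:Olson} applies.

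For the odd case I would map each term $a\mapsto(a\bmod n,\,a^2\bmod n)\in\Z_n\oplus\Z_n$ and seek a nonempty zero-sum subsequence. Since $\Dav{\Z_n\oplus\Z_n}=2n-1$ by Theorem \ref{thm:Olson}, any $S$ of length $2n-1$ produces an $S'$ with $p_1(S')\equiv p_2(S')\equiv 0\pmod n$; then $p_1(S')^2-p_2(S')\equiv 0\pmod n$, while modulo $2$ one always has $p_1(S')^2\equiv p_1(S')\equiv p_2(S')$, so the congruence holds modulo $2n$ and $e_2(S')\equiv 0\pmod n$.

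For the even case, write $n=2^km$ with $k=\nu_2(n)\ge 1$, and set $c:=n/2^{b}=2^{\lceil(k+1)/2\rceil}m$. One checks that $c\mid 2n$ and, crucially, that $2n\mid c^2$ (for the $2$-part this needs $2\lceil(k+1)/2\rceil\ge k+1$, and for the odd part $m\mid m^2$). I would then map $a\mapsto(a\bmod c,\,a^2\bmod 2n)\in\Z_c\oplus\Z_{2n}$. A zero-sum subsequence $S'$ satisfies $p_1(S')\equiv 0\pmod c$, whence $p_1(S')^2\equiv 0\pmod{2n}$ since $2n\mid c^2$, together with $p_2(S')\equiv 0\pmod{2n}$, so again $p_1(S')^2\equiv p_2(S')\pmod{2n}$ and $e_2(S')\equiv 0\pmod n$. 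Since $c\mid 2n$ this group is in invariant-factor form, so Theorem \ref{thm:Olson} gives $\Dav{\Z_c\oplus\Z_{2n}}=c+2n-1=(2+2^{-b})n-1$, the claimed bound.

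The one genuine subtlety, and the step I expect to be the main obstacle, is that the Davenport constant only guarantees a \emph{nonempty} zero-sum subsequence, whereas the definition of $\D{}{2}{\Z_n}$ demands $|S'|\ge 2$; a length-$1$ zero-sum corresponds to a term $a$ that is null in the auxiliary group. I would dispose of this as follows: if no term is null, then every minimal zero-sum subsequence already has length $\ge 2$; if at least two terms $a,a'$ are null, then $e_2(\{a,a'\})=aa'$, and the defining divisibilities of $c$ force $aa'\equiv 0\pmod{n}$ (using $2n\mid c^2$ in the even case), so this pair works. The delicate remaining possibility is a \emph{single} null term sitting alongside an otherwise zero-sum-free remainder. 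In the odd case this is harmless, since a null term is simply an $a\equiv 0\pmod n$ and may be paired with any other term to give $e_2\equiv 0\pmod n$; in the even case, however, pairing a single null only yields divisibility by $c$, not by $n$, so one must use that the auxiliary terms are not arbitrary but lie on the image curve $\{(x,x^2)\}$, and argue that such a remainder cannot be a maximal zero-sum-free sequence of length $\Dav{\Z_c\oplus\Z_{2n}}-1$. This curve-restricted length-$\ge 2$ argument is where I expect the real work to lie.
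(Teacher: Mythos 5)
Your proposal is, in substance, the paper's own proof: the same map $a \mapsto [a,a^2]$, the same auxiliary groups (your $\Z_c \oplus \Z_{2n}$ with $c = 2^{\lceil (\nu_2(n)+1)/2\rceil}m$ is exactly the paper's $\Z_{n/2^b}\oplus\Z_{2n}$, since $\nu_2(n)-b = \lceil (\nu_2(n)+1)/2\rceil$), the same appeal to Theorem~\ref{thm:Olson}, and the same identity $p_1^2 = p_2 + 2e_2$. The only difference is cosmetic: in the odd case the paper cancels the factor $2$ modulo $n$ using $\gcd(2,n)=1$, while you work modulo $2n$ via parity of $p_1$; both are valid.

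The subtlety you isolate is real, and it is precisely the step the paper elides. Having assumed all terms of $S$ nonzero, the paper simply asserts that the zero-sum set $J$ ``necessarily'' has $|J|\geq 2$. That assertion is justified when $n$ is odd, and also when $\nu_2(n)\in\{1,2\}$, because then $c=n$ and a null term would have to equal $0$ in $\Z_n$; in those cases your proposal (and the paper's proof) is complete, and your treatment of two or more null terms, via $2n \mid c^2$, is correct. But when $\nu_2(n)\geq 3$, i.e. $b\geq 1$, nonzero null terms exist: any $a = t\cdot n/2^b$ with $1\leq t\leq 2^b-1$ satisfies $c \mid a$, hence automatically $2n \mid a^2$, and so maps to $(0,0)$ --- for instance, with $n=24$ the element $12$ maps to $(0,0)$ in $\Z_{12}\oplus\Z_{48}$. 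In that range the paper's ``$|J|\geq 2$'' is unjustified, and the configuration you could not close --- a single null term whose complement, of length $\Dav{\Z_c\oplus\Z_{2n}}-1$, might be zero-sum free --- is not closed by the paper either. So you have reproduced the published argument, your extra case analysis goes beyond what is written there, and the residual hole is a genuine gap, but it is a gap in the paper's proof itself rather than a defect peculiar to your write-up. Two observations limit the damage: a single null term $a = t\,n/2^b$ pairs successfully with any other term of $2$-adic valuation at least $b-\nu_2(t)$, so the bad configuration forces every other term to have $2$-adic valuation smaller than $b$; and the maximal-$|J|$ device from the proof of Theorem~\ref{thm:mupperbound} eliminates singleton zero-sums at the cost of one extra term, which proves unconditionally the slightly weaker bound $\D{\elem{2}{\bf x}}{2}{\Z_n} \leq (2+\frac{1}{2^b})n$ for $\nu_2(n)\geq 3$.
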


\begin{proof}
\begin{enumerate}

\item Let $n$ be an odd integer, $n \geq 3$.  Let $S$ be a sequence $(a_1, \ldots ,a_{2n-1})$ in $\Z_n$.  We may assume that all the elements in $S$ are non-zero.  Consider the following elements of $\Z_n \oplus \Z_n: [a_1,a_1^2], \ldots , [a_{2n-1},a_{2n-1}^2]$. Recall Theorem \ref{thm:Olson}, which gives here $\Dav{\Z_n \oplus \Z_n}=2n-1$.  That is, there exists a non-empty subset $J \subseteq \{1,\ldots, 2n-1\}$ such that $\sum_{j \in J} [a_j, a_j^2]=(0,0)$, and necessarily $|J| \geq 2$.  As a result, we have that $\sum_{j \in J}a_j \times \sum_{j \in J}a_j \equiv 0 \pmod{n}$ and that $\sum_{j \in J}a_j \times \sum_{j \in J}a_j= \sum_{j \in J} a_j^2 + 2\sum_{i \neq j,~i,j\in J}a_ia_j = 2\sum_{i \neq j,~i,j\in J}a_ia_j$.  Thus, $2\sum_{i \neq j,~i,j\in J}a_ia_j \equiv 0 \pmod{n}$, and as $n$ is odd we have $\sum_{i \neq j,~i,j\in J}a_ia_j \equiv 0 \pmod{n}$.  Thus, $\D{\elem{2}{\bf x}}{2}{\Z_n} \leq 2n-1$.

\item Let $n=2^{\nu_2(n)}m$ be an integer such that $\nu_2(n) \geq 1, m \geq 3$, and, for convenience, let $b:= \lfloor \frac{\nu_2(n)-1}{2} \rfloor \geq 0$.  Let $S$ be a sequence $(a_1, \ldots ,a_{(2+\frac{1}{2^b})n-1})$ in $\Z_n$.  We may assume that all the elements in $S$ are non-zero.  

{\sc Case: $\nu_2(n)$ is odd}

In this case, we have $\nu_2(n)=2b+1$.  Consider the following elements of $\Z_{m2^{\nu_2(n)-b}} \oplus \Z_{m2^{\nu_2(n)+1}} = \Z_{m2^{b+1}} \oplus \Z_{2n}= \Z_{\frac{n}{2^b}} \oplus \Z_{2n}:$  $$ [a_1,a_1^2], \ldots , [a_{(2+\frac{1}{2^b})n-1},a_{(2+\frac{1}{2^b})n-1}^2].$$ Recall Theorem \ref{thm:Olson}, which gives here $\Dav{\Z_{\frac{n}{2^b}} \oplus \Z_{2n}}=(2+\frac{1}{2^b})n-1$.  That is, there exists a non-empty subset $J \subseteq \{1,\ldots, (2+\frac{1}{2^b})n-1\}$ such that $\sum_{j \in J} [a_j, a_j^2]=(0,0)$, and necessarily $|J| \geq 2$.  That is, we have $\sum_{j \in J}a_j \equiv 0 \pmod{2^{b+1}m}$ and $\sum_{j \in J}a_j^2 \equiv 0 \pmod{2n}$.  As a result, we have that $\sum_{j \in J}a_j \times \sum_{j \in J}a_j \equiv 0 \pmod{2^{b+1}m} \times 0 \pmod{2^{b+1}m}\equiv 0 \pmod{2^{2b+2}m^2} \equiv 0 \pmod{2n}$ and that $\sum_{j \in J}a_j \times \sum_{j \in J}a_j= \sum_{j \in J} a_j^2 + 2\sum_{i \neq j,~i,j\in J}a_ia_j = 2\sum_{i \neq j,~i,j\in J}a_ia_j$.  Thus, $2\sum_{i \neq j,~i,j\in J}a_ia_j \equiv 0 \pmod{2n}$, and so we have $\sum_{i \neq j,~i,j\in J}a_ia_j \equiv 0 \pmod{n}$.  Thus, $\D{\elem{2}{\bf x}}{2}{\Z_n} \leq (2+\frac{1}{2^b})n-1$.

{\sc Case: $\nu_2(n)$ is even}

In this case, we have $\nu_2(n)=2b+2$.  Consider the following elements of $\Z_{m2^{\nu_2(n)-b}} \oplus \Z_{m2^{\nu_2(n)+1}} = \Z_{m2^{b+2}} \oplus \Z_{2n}= \Z_{\frac{n}{2^b}} \oplus \Z_{2n}:$  $$ [a_1,a_1^2], \ldots , [a_{(2+\frac{1}{2^b})n-1},a_{(2+\frac{1}{2^b})n-1}^2].$$ Recall Theorem \ref{thm:Olson}, which gives here $\Dav{\Z_{\frac{n}{2^b}} \oplus \Z_{2n}}=(2+\frac{1}{2^b})n-1$.  That is, there exists a non-empty subset $J \subseteq \{1,\ldots, (2+\frac{1}{2^b})n-1\}$ such that $\sum_{j \in J} [a_j, a_j^2]=(0,0)$, and necessarily $|J| \geq 2$.  That is, we have $\sum_{j \in J}a_j \equiv 0 \pmod{2^{b+2}m}$ and $\sum_{j \in J}a_j^2 \equiv 0 \pmod{2n}$. As a result, we have that $\sum_{j \in J}a_j \times \sum_{j \in J}a_j \equiv 0 \pmod{2^{b+2}m} \times 0 \pmod{2^{b+2}m} \equiv 0 \pmod{2^{2b+4}m^2} \equiv 0 \pmod{4n} \equiv 0 \pmod{2n}$ and that $$\sum_{j \in J}a_j \times \sum_{j \in J}a_j= \sum_{j \in J} a_j^2 + 2\sum_{i \neq j,~i,j\in J}a_ia_j = 2\sum_{i \neq j,~i,j\in J}a_ia_j.$$  Thus, $2\sum_{i \neq j,~i,j\in J}a_ia_j \equiv 0 \pmod{2n}$, and so we have $\sum_{i \neq j,~i,j\in J}a_ia_j \equiv 0 \pmod{n}$. Thus, $\D{\elem{2}{\bf x}}{2}{\Z_n} \leq (2+\frac{1}{2^b})n-1$.

\end{enumerate}
\end{proof}

\begin{rem}
Notice that Theorem \ref{thm:m=2upperbound} provides an upper bound on $\D{\elem{2}{\bf x}}{2}{\Z_n}$ for any $n \geq 3$, whereas Theorem \ref{thm:cyclicbound} and Proposition \ref{prop:Up} only provide upper bounds in the case that $n$ is a prime power.
\end{rem}

\begin{thm}\label{thm:mupperbound}
Let $n \geq 2$ and $\gcd{(n,m!)}=1$.  We have $\D{\elem{m}{\bf x}}{m}{\Z_n}\leq \Dav{\Z_n^{\dom{m}}}+m-1$.
\end{thm}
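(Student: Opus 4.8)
The plan is to reduce the problem to the classical Davenport constant of $\Z_n^{\dom{m}}$ through the power-sum representation, in the spirit of the $m=2$ argument in Theorem~\ref{thm:m=2upperbound}. Fix a minimum dominating set $T=\{p_{i_1},\dots,p_{i_{\dom{m}}}\}$ for $e_m$, which has exactly $\dom{m}$ elements by Lemma~\ref{lemma:dominatingsetsize}. Given a sequence $S=(a_1,\dots,a_\ell)$ over $\Z_n$ with $\ell\geq \Dav{\Z_n^{\dom{m}}}+m-1$, I would map each element through $\phi\colon \Z_n\to\Z_n^{\dom{m}}$, $a\mapsto[a^{i_1},\dots,a^{i_{\dom{m}}}]$. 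The point of this encoding is that a subset $J$ of indices is zero-sum for the tuples $\phi(a_1),\dots,\phi(a_\ell)$ in $\Z_n^{\dom{m}}$ if and only if every power sum in $T$ vanishes on the subsequence $S_J=(a_j)_{j\in J}$.

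The conceptual heart of the argument is the implication that if all power sums in $T$ vanish on a subsequence $S'$, then $e_m(S')=0$. By Equation~\ref{eqn:elementaryinpower}, the quantity $m!\,e_m$ is a polynomial in the power sums with integer coefficients, and multiplication by the scalar $m!$ does not change which power sums occur in each monomial; hence, by the defining property of a dominating set, every monomial of $m!\,e_m$ carries at least one factor $p_{i_k}\in T$. Each such monomial therefore vanishes on $S'$, so $m!\,e_m(S')\equiv 0\pmod{n}$. This is exactly where the hypothesis $\gcd(n,m!)=1$ enters: $m!$ is invertible modulo $n$, and cancelling it yields $e_m(S')\equiv 0\pmod{n}$.

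The main obstacle is that $\D{\elem{m}{\bf x}}{m}{\Z_n}$ requires a witnessing subsequence of length at least $m$, whereas the Davenport constant only guarantees a non-empty zero-sum subsequence, which may be short; reconciling this is the purpose of the additive term $m-1$. I would extract disjoint zero-sum subsequences one at a time and accumulate their index sets into a growing set $J=J_1\cup J_2\cup\cdots$. Since the power sums are additive over disjoint unions, $T$ continues to vanish on $S_J$ at every stage. As long as $|J|\leq m-1$, at least $\ell-(m-1)\geq \Dav{\Z_n^{\dom{m}}}$ elements remain unused, so by definition of the Davenport constant a further non-empty zero-sum subsequence exists and strictly enlarges $J$; after finitely many steps $|J|\geq m$. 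At that moment $S_J$ is an $m$-zero subsequence of length at least $m$ by the implication of the previous paragraph, which establishes $\D{\elem{m}{\bf x}}{m}{\Z_n}\leq \Dav{\Z_n^{\dom{m}}}+m-1$. The one step deserving care is the monomial-by-monomial vanishing claim, which rests on the integrality of $m!\,e_m$ in the power sums together with the fact that the dominating set $T$ meets every monomial.
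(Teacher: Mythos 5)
Your proof is correct and follows essentially the same route as the paper's: the same power-map encoding into $\Z_n^{\dom{m}}$ indexed by a minimum dominating set, the same key step that $m!\,e_m$ has integer coefficients as a polynomial in the power sums so that $\gcd(n,m!)=1$ lets you cancel $m!$, and the same use of the $m-1$ slack to force a zero-sum index set of size at least $m$. The only differences are cosmetic: the paper picks a zero-sum set $J$ of maximum size and derives a contradiction, whereas you grow $J$ greedily by repeatedly extracting disjoint zero-sum subsequences, and the paper names the explicit dominating sets in separate even/odd cases where you work with an abstract minimum dominating set $T$.
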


\begin{proof}
Let $n \geq 2$ be an integer such that $\gcd{(n,m!)}=1$, and let $M=\Dav{\Z_n^{\dom{m}}}+m-1$. Let $S$ be a sequence $(a_1, \ldots ,a_M)$ over $\Z_n$.\\
\noindent {\sc Case: $m$ is even}. Recall that for $m$ even, we have that $\{p_1, \ldots ,p_{m/2}, p_m\}$ is a minimum size dominating set for $e_m$. 
Consider the following $M$ elements of $\Z_n^{\dom{m}}$: 
$$[a_1^1, a_1^2, \ldots ,a_1^{m/2},a_1^m], [a_2^1, a_2^2, \ldots ,a_2^{m/2},a_2^m], \ldots , [a_M^1, a_M^2, \ldots ,a_M^{m/2},a_M^m].$$ 
Obviously, $M=\Dav{\Z_n^{\dom{m}}}+m-1 \geq \Dav{\Z_n^{\dom{m}}}$. 
That is, there exists a non-empty subset $J \subseteq \{1,\ldots, M\}$ such that 
$$\sum_{j \in J} [a_j, \ldots ,a_j^{m/2},a_j^m]=\overbrace{[0,\ldots, 0]}^{\dom{m}}.$$ 
Now, choose $J$ such that $|J|$ is largest. From Equation \ref{eqn:elementaryinpower}, we have that $m!e_m$ may be written as a sum of terms where each term is a product that contains at least one element from the above dominating set and each term has an integer coefficient. 
As a result, we may conclude that $e_m$ evaluates to zero modulo $n$ over $J$. 
If $|J| \geq m$, we are done. 
So, assume $|J| \leq m-1$, and consider the complement of $J$, $J^c= \{1,\ldots, M\} \setminus J$. 
By the assumptions, $|J^c| \geq \Dav{\Z_n^{\dom{m}}}$. 
Therefore, there exists a non-empty subset $J' \subseteq J^c$ such that 
$$\sum_{j \in J'} [a_j, \ldots ,a_j^{m/2},a_j^m]=\overbrace{[0,\ldots, 0]}^{\dom{m}}.$$ As a result, we may consider $J \cup J'$ which has size strictly larger than $J$ and the same property as $J$, contradicting the choice made above.

\noindent {\sc Case: $m$ is odd}. We proceed in a similar manner to the previous case, save that we have that $\{p_1, \ldots ,p_{(m-1)/2},p_m\}$ is a minimum dominating set for $e_m$.
\end{proof}

\section{Concluding remarks and open problems}\label{section:conclusion}

The most natural candidate for further research is the case of $\D{\elem{2}{\bf x}}{2}{\Z_n}$, which in our opinion preserves the same combinatorial and number-theoretic flavor of the $m=1$ case.
 
It is already known from Theorem \ref{thm:cyclicequality} that $\D{\elem{2}{\bf x}}{2}{\Z_{2^r}} = 2^{r+1}$.  Further upper bounds are obtained in Theorem \ref{thm:m=2upperbound}.
 
We have computed $\D{\elem{2}{\bf x}}{2}{\Z_n}$ for $2 \leq n \leq 16$ and $n=18$.  The results are presented in the list of following pairs  $(n, \D{\elem{2}{\bf x}}{2}{\Z_n})$ :  $(2,4), (3,5), (4,8), (5,6), (6,7), (7,10),$  $(8,16), (9,9), (10,9),$  $(11,13), (12,12), (13,14), (14,13), (15,12), (16,32)$ and $(18,13)$. 
 
We arrange this list as such:
\begin{enumerate} 
 
\item $(2,4), (4,8), (8,16), (16,32)$:  this is the case that $n$ is a power of $2$ and is already known from Theorem \ref{thm:cyclicequality} that $\D{\elem{2}{\bf x}}{2}{\Z_{2^r}} = 2^{r+1}$;
 
\item $(9,9), (12,12)$: in this case we have $\D{\elem{2}{\bf x}}{2}{\Z_n}=n.$
 
\begin{prob} For which $n$ does $\D{\elem{2}{\bf x}}{2}{\Z_n}=n$ hold?
 \end{prob}
 
\item $(10,9), (14,13), (15,12), (18,13)$: in this case we have $\D{\elem{2}{\bf x}}{2}{\Z_n}<n.$

\begin{prob} For which $n$ does $\D{\elem{2}{\bf x}}{2}{\Z_n} < n$ hold?
 \end{prob}

\item $(3,5), (5,6), (7,10), (11,13), (13,14)$: this is the case when $n$ is a prime.

We claim that $\D{\elem{2}{\bf x}}{2}{\Z_p} \geq p+1$.  For $p = 2$, this is established by Theorem \ref{thm:cyclicequality}.   For $p \geq 3$ consider the following sequence of length $p$: $S=(1, \ldots, 1, \frac{p+1}{2})$.  Any $2$-zero subsequence must contain at least two elements.  For a subsequence $S'$ we have $e_2(S')=\frac{j(j-1)}{2}+j\frac{p+1}{2} \equiv \frac{j}{2}(j+p) \not \equiv 0 \pmod{p}$ where $j$ counts the number of $1$'s in $S'$ for $1 \leq j \leq p-1$.  Thus, no $2$-zero subsequence exists.

 \begin{prob} For a prime $p$ with $p \equiv 1 \pmod{4}$, does $\D{\elem{2}{\bf x}}{2}{\Z_p}=p+1$ hold?
 \end{prob}
 
 \end{enumerate}

We claim that $\D{\elem{2}{\bf x}}{2}{\Z_p} \geq p+2$ for $p \equiv 3 \pmod 4$. For $p \geq 3$, consider the following sequence of length $p+1$: $S=(1, \ldots, 1, \frac{p+1}{2}, \frac{p+1}{2})$.  From the above, the only case that we need to consider is when $S'$ contains two copies of $\frac{p+1}{2}$.  For any such subsequence $S'$ we have 
\begin{eqnarray*}
e_2(S') &=& \frac{j(j-1)}{2}+2j\frac{p+1}{2}+\frac{(p+1)^2}{4}=\frac{j^2+(j+1)^2+p(4j+p+2)}{4},
\end{eqnarray*}
 where $j$ counts the number of $1$'s in $S'$ and $1 \leq j \leq p-1$.  Note that $$j^2+(j+1)^2+p(4j+p+2) \equiv j^2+(j+1)^2 \pmod{p}.$$  However, a prime is expressible as the sum of two squares if and only if congruent to $1 \pmod{4}$, a fact first observed by A.~Girard in 1625 (and later by P. de Fermat).

\begin{prob}

Given a prime $p$, let $q=p^r$ and $m=p^s$ be two powers of $p$. Is it true that every $m$-zero free sequence $S$ of length $\D{\elem{m}{\bf x}}{m}{\Z_q}-1 =mq-1$ over $\Z_q$ has the form $S=(a,\dots,a)$, where $a$ generates the additive group of $\Z_q$?

\end{prob}

\begin{prob}
Determine an upper bound for $\D{\elem{m}{\bf x}}{m}{\abeliangroup{b}}$.
\end{prob}

\begin{prob}
In light of Theorem \ref{thm:mupperbound}: determine an upper bound for $\D{\elem{m}{\bf x}}{m}{\Z_n}$ when $\gcd{(n,m!)} >1$.
\end{prob}

\noindent {\bf Acknowledgment.}  We give thanks to BIRS-CMO 2019 and Casa Matem\'atica Oaxaca, Mexico for supporting and hosting the event Zero-Sum Ramsey Theory: Graphs, Sequences and More 19w5132.

\end{document}